\documentclass[12pt,a4paper,reqno]{article}
\usepackage{graphicx} 
\usepackage{multicol}
\usepackage{authblk}
\usepackage{graphicx}
\usepackage{amsthm}
\usepackage{amssymb,amsthm}
\usepackage{empheq,mathtools}
\usepackage{enumerate}
\numberwithin{equation}{section}
\usepackage{graphicx}
\usepackage{subcaption}
\usepackage{dsfont}
\usepackage{array}
\usepackage{rotating}
\usepackage{caption}
\usepackage{tabularx}

\makeatletter
\renewcommand{\p@subfigure}{\thefigure}
\makeatother

\providecommand{\keywords}[1]
{
	\small	
	\textbf{{Keywords --}} #1
}

\providecommand{\AMS}[1]
{
	\small	
	\textbf{{AMS subject classifications --}} #1
}

\usepackage{lineno,hyperref}
\hypersetup{
	colorlinks=true,
	linkcolor=blue,
	citecolor=magenta,
	filecolor=magenta,      
	urlcolor=blue,
	pdftitle={Overleaf Example},
	pdfpagemode=FullScreen,
}

\mathtoolsset{showonlyrefs} 

\usepackage[
backend=biber,
dashed=true,
citestyle=numeric,
bibstyle=authoryear,
sorting=nyt,
sortcites=true,
giveninits=true,
firstinits=true,
maxbibnames=99,
natbib = true
]{biblatex}

\makeatletter
\input{numeric.bbx}
\makeatother

\DeclareNameAlias{author}{given-family}
\DeclareFieldFormat[article]{title}{#1}
\DeclareFieldFormat[article]{journaltitle}{\mkbibemph{#1}}
\DeclareFieldFormat[article]{volume}{\textbf{#1}}
\DeclareFieldFormat[article]{number}{\mkbibparens{#1}}

\renewbibmacro{in:}{}

\usepackage[margin=1in]{geometry}

\newcommand{\smoothspace}{C_0^\infty(\Omega_T)}
\newcommand{\dt}{\mathrm{d}t}
\usepackage{appendix}

\newcommand{\ellip}{\mathcal{A}_\mathrm{m}}
\newcommand{\Hm}{H^{\mathrm{m}}_0(\Omega)}
\newcommand{\rmD}{\mathrm{D}^{\mathrm{m}}}

\usepackage{newtxtext}  
\usepackage{newtxmath}

\newcommand{\norm}[1]{\| #1 \|}

\theoremstyle{definition}
\newtheorem{definition}{Definition}[section]

\newtheorem{thm}{Theorem}[section]
\newtheorem{remark}{Remark}[section]
\newtheorem{lemma}{Lemma}[section]
\newtheoremstyle{case}{}{}{}{}{}{:}{ }{}
\theoremstyle{case}

\title{Convergence analysis of fourth order parabolic problems with non-linear source terms}

\begin{document}
	\newgeometry{margin=0.75in}
	\title{Well--posedness and regularity for semilinear time--dependent second and fourth order in space equations}
	
	\author[1]{Gopikrishnan Chirappurathu Remesan\thanks{mail:
			\href{mailto:gopikrishnan@iitpkd.ac.in}{gopikrishnan@iitpkd.ac.in}}}
	\date{\today}
	\affil[1]{\small{Department of Mathematics, Indian Institute of Technology Palakkad, Kanjikode, Palakkad, Kerala, 678623}}
	\maketitle
	
	\begin{abstract}
		\noindent This article discusses a unified convergence analysis of the semilinear time--dependent equation $\partial_t u + (-1)^\mathrm{m}\Delta^{\mathrm{m}}u + u^3 - u = f$ with $\mathrm{m} \in \{1,2\}$ and  homogeneous Dirichlet boundary conditions. The analysis relies on Faedo--Galerkin approximation and convergence via compactness estimates. The existence and uniqueness of the weak solution is proved when the initial data is smooth. A refined and novel analysis extends the existence result to problems with rough initial data also.
	\end{abstract}
	
	\medskip
	\noindent \keywords{Time--dependent, Semilinear equations, Fisher--Kolmogorov equation, Extended Fisher--Kolmogorov equation, Fourth--order parabolic equation, Faedo--Galerkin approximation, Convergence analysis, Compactness estimates}
	
	\medskip
	\noindent \AMS{35K58, 35K91, 35K35 (Primary), 46B50 (Secondary)}
	
	\section{Introduction}
	This article presents well--posedness and regularity results for a generic time--dependent problem that involves an $\mathrm{m}$-harmonic spatial derivative, a semilinear nonlinearity, and a square integrable source term, which reads 
	\begin{align}\label{eqn:main_problem_fourth-nonlinear}
		\partial_t u + (-1)^\mathrm{m}\Delta^{\mathrm{m}} u + \phi(u)= f\quad \mathrm{in}\;\;(0,T] \times \Omega,
	\end{align}
	wherein $\mathrm{m} \in \{1,2\}$. Here, $\Omega$ is a bounded polygonal domain in $\mathbb{R}^2$ with the Lipschitz continuous boundary $\partial \Omega$ and $0 < T < \infty$ is the final time. The unknown $u = u(t,x)$ is a function of time and space, and $\phi(u) = u^3 - u$ is the semilinear nonlinearity.
	This partial differential equation (PDE) is supplemented with the homogeneous boundary conditions 
	\begin{align} \label{eqn:homo_boundary}
		\dfrac{\partial ^j u}{\partial \boldsymbol{n}^j} = 0 \quad \text{on} \quad \partial \Omega \quad \text{for} \quad 0 \le j \le \mathrm{m}-1,
	\end{align}
	where $\boldsymbol{n}$ is the unit outward normal to $\partial \Omega.$ Moreover, the initial condition $u(0,\bullet) = u_0(\bullet)$ on $\Omega$ is specified with an a priori known function $u_0$.  When $\mathrm{m} = 1$, the PDE in~\ref{eqn:main_problem_fourth-nonlinear} reduces to the Fisher--Kolmogorov (FK) equation with the homogeneous boundary condition $u = 0$ on $\partial \Omega$. The case $\mathrm{m} = 2$ corresponds to the extended Fisher--Kolmogorov equation (EFK) with the homogeneous boundary conditions $ u = 0 = \dfrac{\partial u}{\partial \boldsymbol{n}}$  on $\partial \Omega$. 
	
	\medskip \noindent The main objective of this article is to establish an existence theory for~\eqref{eqn:main_problem_fourth-nonlinear}--\eqref{eqn:homo_boundary} under low regularity assumptions on the data. Moreover, improved spatial regularity of weak solutions using elliptic regularity is also derived. The analysis first considers the case \(u_0\in H_0^\mathrm{m}(\Omega)\), where a Faedo--Galerkin approximation combined with appropriate discrete energy estimates yields existence, uniqueness, and improved regularity. The discussion is then extended to the case \(u_0\in L^2(\Omega)\), where existence can still be proved, although the weaker time regularity changes the structure of the argument. Here, a novel key result (Lemma~\ref{lemma:L1convergence}) that connects almost everywhere and $L^p(\Omega)$ convergence is derived, which facilitates the convergence of nonlinear terms. A crucial part of the analysis is the treatment of the cubic nonlinearity through compactness and weak convergence arguments, which apply in a unified manner to both the second--order and fourth--order cases. The regularity of the data $f$ also dictates the well--posedness of the problem and regularity of the weak solution. This article assumes that $f \in L^2(0,T;L^2(\Omega))$.

	\medskip \noindent 
	
	\subsection{Physical relevance}
	The classical Fisher--Kolmogorov equation (i.e.~\eqref{eqn:main_problem_fourth-nonlinear} with \( \mathrm{m} = 1 \)) describes the propagation of a wavefront connecting the stable equilibrium state \( u = \pm 1 \) and the unstable equilibrium state \( u = 0 \). The second--order diffusion term enforces a smooth, monotone transition between these states. Figure~\ref{fig:smooth_front} illustrates this behaviour through a numerical solution of~\eqref{eqn:main_problem_fourth-nonlinear} with the initial condition \( u(0,x) = \exp(-x^2) \) and truncated boundary conditions \( u(t,0) = 1 \), \( u(t,20) = 0 \). 
	Beyond front propagation, the Fisher--Kolmogorov framework arises in several applications, including phase transition modelling in binary alloys~\cite{allencahn}, diffuse interface methods for image segmentation~\cite{Lee}, and phase--field models of tumour growth~\cite{Garcke}.

	\begin{figure}[htbp]
		\centering
		\captionsetup[subfigure]{justification=centering}
		\caption{Evolution of the numerical solution $u_h$ showing smooth monotonic front from FK and oscillatory kink from EFK equations.}
		\label{fig:front_comparison}
		\begin{subfigure}[b]{0.48\textwidth}
			\centering
			\caption{Smooth front from Fisher-Kolmogorov equation}
			\includegraphics[width=\linewidth]{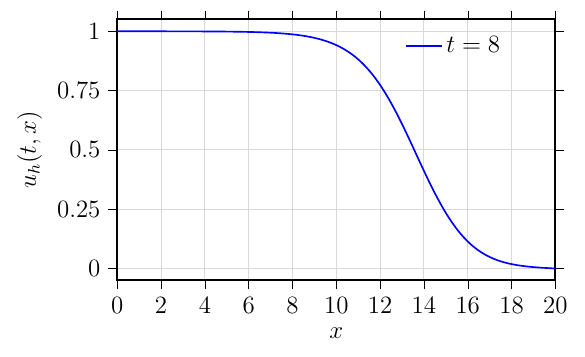}
			\label{fig:smooth_front}
		\end{subfigure}
		\hfill
		\begin{subfigure}[b]{0.48\textwidth}
			\centering
			\caption{Oscillatory kinks from extended Fisher--Kolmogorov equation}
			\includegraphics[width=\linewidth]{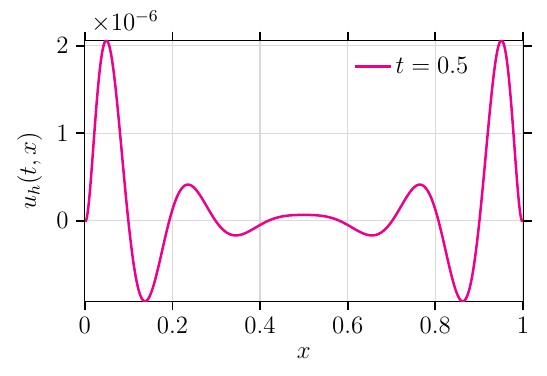}
			\label{fig:osc_kinks}
		\end{subfigure}
	\end{figure}

	\medskip  In contrast, certain bistable systems exhibit non-monotone transitions characterised by oscillatory structures, commonly referred to as \emph{kinks}. Such behaviour is captured by augmenting the Fisher--Kolmogorov equation with a higher--order diffusion term \( \gamma \Delta^2 u \), leading to the extended Fisher--Kolmogorov equation. In one spatial dimension, this takes the form 
	\begin{align} \label{eqn:1D_foruth_order}
		\dfrac{\partial u}{\partial t}
		+ \gamma \dfrac{\partial^4 u}{\partial x^4}
		- \dfrac{\partial^2 u}{\partial x^2}
		+ u^3 - u = 0.
	\end{align}
	\begin{figure}[htbp]
		\centering
		\caption{{Numerical solution $u_h$ of~\eqref{eqn:1D_foruth_order} for different values of $\gamma$ at $t = 0.5$}}
		\captionsetup{justification=centering}
		\vspace{0.5em}
		
		\begin{subfigure}{0.32\textwidth}
			\centering
			\caption{$\gamma = 0.1$}
			\label{fig:p1}
			\includegraphics[width=\linewidth]{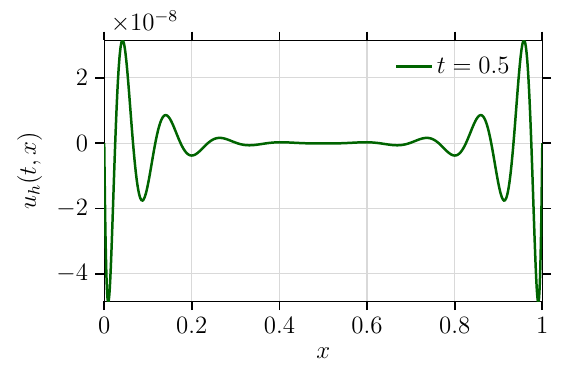}
		\end{subfigure}
		\hfill
		\begin{subfigure}{0.32\textwidth}
			\centering
			\caption{$\gamma = 0.01$}
			\label{fig:p01}
			\includegraphics[width=\linewidth]{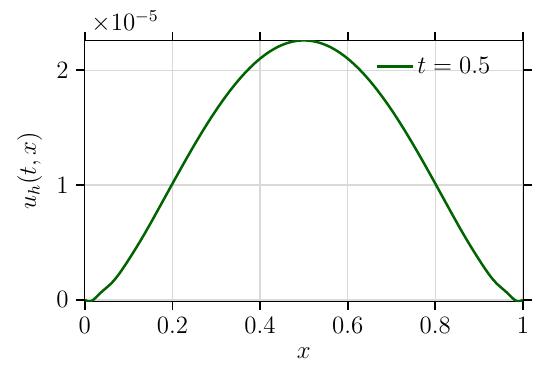}
		\end{subfigure}
		\hfill
		\begin{subfigure}{0.32\textwidth}
			\centering
			\caption{$\gamma = 0.001$}
			\label{fig:p001}
			\includegraphics[width=\linewidth]{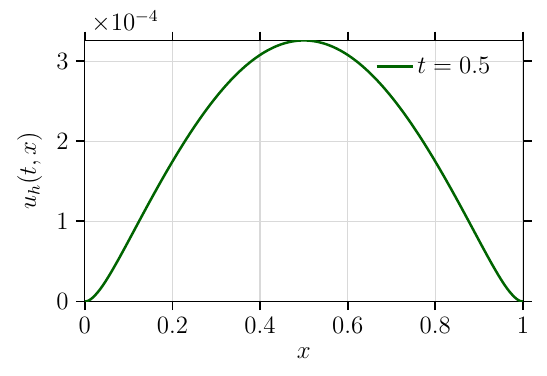}
		\end{subfigure}
	\end{figure}
	
	Figure~\ref{fig:osc_kinks} shows the emergence of oscillatory kinks around the unstable equilibrium state for \( \gamma = 1 \), with the initial condition \( u(0,x) = x^2(1-x)^2 \) and boundary conditions \( u(t,\bullet) = 0 \), \( \partial_x u(t,\bullet) = 0 \) at \(\bullet = 0,1 \) (see~\cite{pani_dhanum_1} also).
	The parameter \( \gamma \) governs the qualitative behaviour of solutions. When \( \gamma < \gamma_{\mathrm{c}} = 1/8 \) (see~\cite{Peletier}), the second-order term dominates and oscillations diminish. For sufficiently small \( \gamma \), solutions recover the monotone front structure of the Fisher--Kolmogorov equation (see Figures~\ref{fig:p01}--\ref{fig:p001}). The extended Fisher--Kolmogorov model also appears in mesoscopic model of phase transitions in binary systems~\cite{hornreich1975critical}, propagation of domain walls in liquid crystals~\cite{guozhen1982experiments}, and travelling wave phenomena in reaction--diffusion systems~\cite{aronson1978multidimensional}.
	
	\medskip
	
	The analysis in this article applies for all \( \gamma \ge 0 \). The cases \( \gamma = 0 \) and \( \gamma > 0 \) correspond to \( \mathrm{m} = 1 \) and \( \mathrm{m} = 2 \) in~\eqref{eqn:main_problem_fourth-nonlinear}, respectively.

	\subsection{Literature}
	The possibly earliest works that generalises the FK equation to the EFK equation to capture oscillatory phase transitions in bistable systems are~\cite{coulette} and~\cite{deewim}. Additionally, the authors in~\cite{deewim} explore how the behavior of solutions shift in relation to the parameter \(\gamma\) in \eqref{eqn:1D_foruth_order}. An early analysis, oscillatory features, and zeros of the solutions of FK and EFK equations are presented in~\cite{peletier,Peletier}. Several analytical properties such as maximum principles of a similar 1D problem with $\phi(u)= u(1-u)(au + b(1-u))$, where $a$ and $b$ are constants are studied in~\cite{aronson}; and its multidimensional version in~\cite{aronson1978multidimensional}.
	
	\medskip 
	The existence and uniqueness of a weak solution of~\eqref{eqn:main_problem_fourth-nonlinear} with the boundary condition $u(t,\bullet) = 0 = \Delta u(t,\bullet)$, initial condition $u_0 \in H^2(\Omega) \cap H^1_0(\Omega)$, and an optimal control that replaces $f$ is presented in~\cite{ning}. However, this analysis only supplies $u \in L^2(0,T;H^2(\Omega) \cap H^1_0(\Omega))$ and $\partial_t u \in L^2(0,T;(H^2(\Omega) \cap H^1_0(\Omega))^{\ast}).$ The Faedo--Galerkin approximation for the EFK equation (\eqref{eqn:main_problem_fourth-nonlinear}--\eqref{eqn:homo_boundary} with $\mathrm{m} = 2$), wherein $u_0 \in H^2_0(\Omega)$ that leads to existence and uniqueness of a weak solution is briefly presented in~\cite{pani_dhanum_1,pani_dhanum_2}. Regarding the temporal regularity in~\cite{pani_dhanum_2}, the inclusion $\partial_t u \in L^\infty(0,T;L^2(\Omega))$ is not immediate from the arguments presented therein. The current work fixes this and shows that $\partial_t u \in L^2(0,T;L^2(\Omega))$ rigorously.
	
	\medskip The existence of weak solutions to~\eqref{eqn:main_problem_fourth-nonlinear} with sufficient regularity plays a central role in the a priori error analysis of finite element and polytopal schemes. A mixed formulation of~\eqref{eqn:main_problem_fourth-nonlinear} subject to Neumann boundary conditions was studied in~\cite{musawi}, where continuous linear finite elements are used for spatial discretisation and the backward Euler scheme for temporal discretisation. A fully discrete error analysis for the EFK equation, based on backward Euler time stepping and nonconforming quadratic finite elements, was presented by A.~Das et al.~\cite{das_nataraj_remesan}. Their analysis assumes that 
	$u \in L^2(0,T;H^{2+\sigma}(\Omega)) \quad \text{and} \quad \partial_t u \in L^2(0,T;L^2(\Omega)).$
	Semi--discrete and fully discrete formulations employing nonconforming virtual element methods in space were investigated by L.~Pei et al.~\cite{Lifang}, under the assumption of the existence of a weak solution in the sense of Definition~\ref{defn:weak_solution}. More recently, N.~Nataraj and R.~Kumar~\cite{raman_neela} developed a fully discrete scheme in which the spatial operator is approximated using hybrid higher--order methods. The breadth of existing work from numerical analysis relying on regularity properties of the FK and EFK equations highlights the relevance and scope of the present study.

	\subsection{Organisation and contributions}
	This article is organized into two sections. Section~\ref{sec:smooth_initial_data} addresses~\eqref{eqn:main_problem_fourth-nonlinear} under the regularity assumption \(u_0 \in H^{\mathrm{m}}_0(\Omega)\) using the Faedo--Galerkin approximation, yielding existence and uniqueness of a weak solution. Though this and similar results have been reported in earlier seminal works (see, e.g.,~\cite{gudi_gupta,pani_dhanum_1,pani_dhanum_2,das_nataraj_remesan,raman_neela}), a complete and rigorous proof has not been available in the scientific literature. This work bridges this research gap.
	Section~\ref{sec:rough_initial_data} relaxes the regularity requirement to \(u_0 \in L^2(\Omega)\) and establishes the existence of a weak solution to~\eqref{eqn:main_problem_fourth-nonlinear}. The reduced regularity necessitates a more delicate analysis, relying on several key compactness and stability arguments. This is the first such result--to the best of our knowledge--in the literature establishing existence under the minimal regularity assumption \(u_0 \in L^2(\Omega)\). Section~\ref{sec:concluding remarks} presents concluding remarks and future directions. 
	
	\subsection{Preliminaries} 
	In the sequel, $\Omega_T := (0,T) \times \Omega$ and the space of all smooth functions with compact support on $\Omega_T$ is denoted by $\smoothspace$. The Sobolev--Slobedeckij space $H^k(\Omega)$, the Bochner space $L^p(0,T;X)$ with $1 \le p \le \infty$, and $C^{k}([0,T];X)$, wherein $X$ is a Banach space, bear the standard meaning (see~\cite[Chapters 5--7]{evans_PDE}) in the sequel. 
	
	\medskip \noindent The bilinear form $\ellip : \Hm \times \Hm \rightarrow \mathbb{R}$ is defined by $\ellip(u,v) = (\rmD u, \rmD v)$ for all $u, v \in \Hm.$ The standard $\|\!\bullet\!\|_{L^2(\Omega)}$ norm is abbreviated as $\|\!\bullet\!\|$ in the sequel.  The Sobolev space $V = \Hm $ is endowed with the norm $\|\!\bullet\!\|_V := \|\rmD(\bullet)\|.$
	
	\medskip \noindent Define the function $\Phi : \mathbb{R} \rightarrow \mathbb{R}$ by $\Phi(x) = (1-x^2)^2/4$ and observe that $\Phi'(x) = \phi(x) = x^3 - x$. In the sequel, we employ the Young's inequality, for $a, b >0$ and $\varepsilon \in (0,1)$ it holds
	\begin{align} \label{ineq:youngs}
		ab \le \varepsilon\dfrac{a^2}{2} + \dfrac{b^2}{2\varepsilon}.
	\end{align}
	The notation \( A \lesssim B \) means that \( A \le C B \) for a generic constant \( C>0 \), possibly depending on the domain; in particular, \( \|g\|_{X} \lesssim 1 \) for some norm $\|\!\bullet\!\|_X$ implies \( \|g\|_{X} \le C \) with \( C \) independent of \( g \).

	\begin{definition}[weak solution]  For a given initial data $u_0 \in V$, a weak solution to~\eqref{eqn:main_problem_fourth-nonlinear} is a function $u: \Omega_{T} \to  \mathbb{R}$ that satisfies 
		\begin{subequations} \label{eqn:time_dep}
			\begin{align}\label{time_dep_weak_form}
				\displaystyle (\partial_t u,w) + \ellip(u,w) + (\phi(u),w) ={}& (f,w)\quad \text{for all}\,\, w \in V, \text{ and } \\ \label{eqn:initial_data_changed}
				u_{0}(0,\bullet) ={}& u_0(\bullet).
			\end{align}   
		\end{subequations} \label{defn:weak_solution}
	\end{definition}
	\noeqref{eqn:initial_data_changed}
	\section{Convergence analysis with smooth initial data} \label{sec:smooth_initial_data}
	This section presents the convergence analysis of~\eqref{eqn:main_problem_fourth-nonlinear} for $u_0 \in V$. Section~\ref{sec:existence_uniqueness_with_V} establishes the existence and uniqueness of a weak solution using the Faedo--Galerkin approximation method. Section~\ref{sec:improved_regularity_with_V} then derives an improved regularity result for $u$ using elliptic regularity.

	\subsection{Existence and uniqueness} \label{sec:existence_uniqueness_with_V}
	\begin{thm} Suppose that $u_0 \in V$. Then, there exist a unique weak solution to~\eqref{eqn:main_problem_fourth-nonlinear} in the sense of Definition~\ref{defn:weak_solution} such that
		$u \in L^{\infty}(0,T;V)$ and $\partial_t u \in L^2(0,T;L^2(\Omega))$. \label{existencenonlinear}
	\end{thm}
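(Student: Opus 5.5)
We plan a standard Faedo--Galerkin argument. Let $\{w_k\}_{k\ge1}$ be the eigenfunctions of $\ellip$ on $V$, i.e.\ $\ellip(w_k,v)=\lambda_k(w_k,v)$ for all $v\in V$. They are orthonormal in $L^2(\Omega)$, orthogonal in $V$, and their span is dense in $V$. Set $V_n=\mathrm{span}\{w_1,\dots,w_n\}$ and seek $u_n(t)=\sum_{k=1}^n c_k^n(t)w_k$ with
\begin{align}
(\partial_t u_n,w)+\ellip(u_n,w)+(\phi(u_n),w)=(f,w)\quad\text{for all } w\in V_n,
\end{align}
and $u_n(0)=P_nu_0$, where $P_n$ is the $L^2$ projection onto $V_n$. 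Since the eigenbasis is also $V$-orthogonal, $P_n$ coincides with the $V$-projection, so $\|P_nu_0\|_V\le\|u_0\|_V$ and $P_nu_0\to u_0$ in $V$. The coefficients solve an ODE system with locally Lipschitz right-hand side (cubic polynomial) and $L^2(0,T)$ forcing, so Carath\'eodory theory gives a local solution. The a priori bounds below then extend it to $[0,T]$.

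\textbf{Energy estimates.} First test with $w=u_n$. Using $(\phi(u_n),u_n)=\|u_n\|_{L^4}^4-\|u_n\|^2$ and Gronwall, we get $u_n$ bounded in $L^\infty(0,T;L^2)\cap L^2(0,T;V)\cap L^4(\Omega_T)$. The key second estimate tests with $w=\partial_t u_n\in V_n$. Since $(\phi(u_n),\partial_tu_n)=\frac{d}{dt}\int_\Omega\Phi(u_n)$, this gives
\begin{align}
\|\partial_tu_n\|^2+\frac12\frac{d}{dt}\|u_n\|_V^2+\frac{d}{dt}\int_\Omega\Phi(u_n)\,dx=(f,\partial_tu_n)\le\tfrac12\|f\|^2+\tfrac12\|\partial_tu_n\|^2.
\end{align}
Integrating in time and using $\Phi\ge0$, we obtain
\begin{align}
\int_0^T\|\partial_tu_n\|^2+\sup_t\|u_n\|_V^2\lesssim\|u_0\|_V^2+\int_\Omega\Phi(P_nu_0)+\|f\|_{L^2(0,T;L^2)}^2.
\end{align}
The term $\int\Phi(P_nu_0)$ is bounded uniformly because $V\hookrightarrow H^1_0\hookrightarrow L^4$ in 2D. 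Hence $u_n$ is bounded in $L^\infty(0,T;V)$ and $\partial_tu_n$ in $L^2(0,T;L^2)$. By the Sobolev embedding $H^1_0\hookrightarrow L^6$ in 2D, $\phi(u_n)$ is then bounded in $L^\infty(0,T;L^2)$.

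\textbf{Passage to the limit.} This is the main point. Extract a subsequence with $u_n\rightharpoonup^* u$ in $L^\infty(0,T;V)$, $\partial_tu_n\rightharpoonup\partial_tu$ in $L^2(0,T;L^2)$, and $\phi(u_n)\rightharpoonup\chi$ in $L^2(\Omega_T)$. To identify $\chi=\phi(u)$ we use Aubin--Lions with $V\Subset L^2\subset L^2$, which gives $u_n\to u$ strongly in $C([0,T];L^2)$ and, along a further subsequence, a.e.\ in $\Omega_T$. Then $\phi(u_n)\to\phi(u)$ a.e. Since $\phi(u_n)$ is bounded in $L^2(\Omega_T)$, it is uniformly integrable in $L^1$, so Vitali (or the standard lemma that a.e.\ convergence plus an $L^2$ bound gives weak $L^2$ convergence) yields $\chi=\phi(u)$. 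Testing with $w\in V_m$ times $\psi\in C_c^\infty(0,T)$, letting $n\to\infty$, and using density of $\bigcup_mV_m$ in $V$ gives~\eqref{time_dep_weak_form} for a.e.\ $t$. The initial condition follows from $u_n\to u$ in $C([0,T];L^2)$ and $P_nu_0\to u_0$.

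\textbf{Uniqueness.} Let $e=u_1-u_2$ for two solutions and test the difference equation with $e$. Monotonicity of $x\mapsto x^3$ gives $(u_1^3-u_2^3,e)\ge0$, so
\begin{align}
\frac12\frac{d}{dt}\|e\|^2+\|e\|_V^2\le\|e\|^2.
\end{align}
Gronwall with $e(0)=0$ then gives $e\equiv0$. This step is legitimate because $\partial_tu\in L^2(0,T;L^2)$ and $u\in L^2(0,T;V)$, so $t\mapsto\|e\|^2$ is absolutely continuous.
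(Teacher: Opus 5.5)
Your proof is correct. It has the same skeleton as the paper's: the eigenfunction Galerkin basis with projected initial data, the key estimate from testing with $\partial_t u_n$ and using $\Phi\ge 0$, weak-$\star$ compactness, and uniqueness by testing the difference with $e$ and applying Gronwall. Two steps are done differently.

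\textbf{The cubic term.} The paper uses moving test functions $v_n=\pi_n v$ with $v\in L^2(0,T;V)$. It bounds $\int_0^T(u_n^3-u^3,v_n)\,\mathrm{d}t$ directly through the factorisation $u_n^3-u^3=(u_n-u)(u_n^2+u_nu+u^2)$, strong convergence in $L^2(0,T;L^2(\Omega))$, and an $L^2\cdot L^4\cdot L^4$ H\"older split. That split needs uniform $L^8(\Omega)$ bounds, i.e.\ $V\hookrightarrow L^8(\Omega)$.

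You use only that $\phi(u_n)$ is bounded in $L^2(\Omega_T)$ (an $L^6$ bound) and converges a.e. You then identify its weak limit by Vitali's theorem or Lions' lemma. This is the mechanism the paper reserves for rough data in Lemma~\ref{lemma:L1convergence}. Weak convergence of $\phi(u_n)$ suffices because your test functions $w\psi$ are fixed. Your route also needs less integrability: it works unchanged for $\mathrm{m}=1$ on three-dimensional domains, where $H^1_0\hookrightarrow L^6$ still holds. So the restriction in the paper's remark on $\mathbb{R}^3$ comes from its particular H\"older split, not from the problem itself.

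\textbf{The initial condition.} The paper only has compactness in $L^2(0,T;L^2(\Omega))$. It recovers $u(0,\bullet)=u_0$ by integrating by parts against test functions vanishing at $T$ and invoking Lions--Magenes. You use the stronger compactness in $C([0,T];L^2(\Omega))$, via Simon's version of Aubin--Lions, or via Arzel\`a--Ascoli with $\|u_n(t)-u_n(s)\|\le|t-s|^{1/2}\|\partial_t u_n\|_{L^2(0,T;L^2(\Omega))}$ and $V\Subset L^2(\Omega)$. This makes the step a one-liner, at the cost of a compactness theorem beyond the one the paper quotes.

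\textbf{Further points in your favour.}
Carath\'eodory's theorem is the more accurate ODE tool here, since $t\mapsto(f(t,\bullet),w_k)$ is only in $L^2(0,T)$. Picard--Lindel\"of as stated in the paper requires continuity in $t$.
You also justify the absolute continuity of $t\mapsto\|e\|^2$ in the uniqueness step, which the paper leaves implicit.

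\textbf{Minor points.}
To get a single exceptional null set in time, pass to the limit for each of the countably many $w_k$. Then extend to all $w\in V$ by linearity and continuity in $w$.
Your preliminary estimate with test function $u_n$ is redundant for the final bounds, but harmless; it does give a clean non-blow-up argument for the ODE.
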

	
	\medskip \noindent For technical clarity and ease, the proof is divided into steps. Step 1 presents the \emph{Faedo--Galerkin approximation} of~\eqref{time_dep_weak_form} that yields a finite dimensional approximate solution, followed by its discrete energy estimates in Step 2. The compactness results arise from these estimates are presented in Step 3. Step 4 presents convergence arguments that lead to the existence of a weak solution. Step 5 shows that the initial condition is satisfied by the weak solution, followed by the uniqueness of weak solutions in Step 6.

	\medskip \noindent \emph{Proof of Theorem~\ref{existencenonlinear}. }
	\emph{Step 1. (Finite dimensional approximation).} An application of Theorem~\ref{thm:basis} with $\mathcal{L} = (-1)^{\mathrm{m}}\Delta^\mathrm{m}$ guarantees the existence of an orthonormal basis $(w_j)_{j\geq1} \;\subset C^{\infty}(\Omega)$ of $L^2({\Omega})$. Since the functions $(w_j)_{j\geq 1}$ are eigenfunctions of the operator $(-1)^{\rm m}\Delta^{\mathrm{m}}$,  $(w_j)_{j\geq1}$ is also an orthogonal basis of $V={}\Hm$. Define the finite dimensional subspace $V_n\subset V$ as the span of $(w_j)_{1\leq j\leq n}$. 
	Suppose $u_n \in V_n$ satisfies 
	\begin{align}\label{eqn;nonlinearweakformulation}
		( \partial _tu_n,w_j)+\ellip(u_n,w_j)+(\phi(u_n),w_j)=(f(t,\bullet),w_j) \quad \text{for all}\,
		\,1 \le j \le n
	\end{align}
	and the initial condition $u_n(0,\bullet)=\pi_nu_0$, where $\pi_n:V\rightarrow V_n$ is the orthogonal projection. This is a system of ordinary differential equations with a locally Lipschitz nonlinearity $\phi(u)$. Therefore, the local existence and uniqueness of the solution to~\eqref{eqn;nonlinearweakformulation} follows from the Picard--Lindel\"{o}f  theorem (see Theorem~\ref{thm:picard_lindelof}). The global existence is guaranteed by the uniform bounds established below. Equation~\eqref{eqn;nonlinearweakformulation} implies, for $\widehat{w} \in V_n$ it holds 
	\begin{align}\label{eqn:all}
		( \partial _tu_n,\widehat{w})+\ellip(u_n,\widehat{w})+(\phi(u_n),\widehat{w})=(f(t,\bullet),\widehat{w}).
	\end{align}

	\medskip \noindent \emph{Step 2. (Discrete energy estimates).}
	Set $\widehat{w} = \partial_t u_n$ in~\eqref{eqn:all} to arrive at,
	\begin{align} \label{eqn:diff_eqn_on_un}
		\| \partial_t u_n \|^2 + \dfrac{1}{2} \dfrac{\mathrm{d}}{\mathrm{d}t} \|u_n\|_V^2  + \dfrac{\mathrm{d}}{\mathrm{d}t} (\Phi(u_n),1) = ( f(t,\bullet), \partial_t u_n ),
	\end{align}
	where $\Phi(u) = (1-u^2)^2/4$. 
	Integrate the above equation over $(0,T)$ to arrive at,
	\begin{align} \label{eqn:test_function_h1}
		\hspace{-0.75cm} \int_{0}^T\| \partial_t u_n \|^2\,\mathrm{d}t + \dfrac{1}{2}\|u_n(T, \bullet) \|_V^2  + (\Phi(u_n(T,\bullet)),1) ={}& \int_{0}^T( f(t,\bullet), \partial_t u_n)\,\mathrm{d}t +  \dfrac{1}{2}\|\pi_n u_0 \|_V^2  + (\Phi(\pi_n u_0),1)
	\end{align}
	with $u_n(0,\bullet) = \pi_n u_0$ in the last step.
	The last displayed inequality and  $\|\pi_n u_0\|_V \le \|u_0\|_V$ from Theorem~\ref{thm:orthogonal_projection} show
	\begin{align*}
		\int_{0}^T\| \partial_t u_n \|^2\,\mathrm{d}t + \|u_n(T, \bullet) \|_V^2  + (\Phi(u_n(T,\bullet)),1) 
		\lesssim{}& \frac{1}{2\varepsilon}\int_{0}^{T}\|f(t,\bullet)\|^2\;\dt\;+\frac{\varepsilon}{2}\int_{0}^{T}\|\partial_t u_n\|^2 \; \dt \\
		&+\|u_0\|_V^2  +  (\Phi(\pi_n u_0),1)
	\end{align*}
	with the Young's inequality~\eqref{ineq:youngs} in the last step.  Use the estimate $(\Phi(\pi_n u_0),1) \le (\|\pi_n u_0\|_{L^4(\Omega)}^4 + {\rm meas}(\Omega))/4$ to arrive at
	\begin{align*}
		\left(1 - \dfrac{\varepsilon}{2} \right) \int_{0}^T\| \partial_t u_n \|^2\,\mathrm{d}t + \|u_n(T, \bullet) \|_V^2  + (\Phi(u_n(T,\bullet)),1) 
		\lesssim{}& \frac{1}{2\varepsilon}\int_{0}^{T}\|f(t,\bullet)\|^2\;\dt\ \\
		&{}+ \|u_0\|_{V}^2 + (\|u_0\|_{V}^4 + {\rm meas}(\Omega))/4
	\end{align*} 
	with the inequality $\|\pi_n u_0\|_{L^4(\Omega)} \lesssim \|\pi_n u_0\|_{V}$ from the embedding $V \hookrightarrow L^q(\Omega)$ for $1 \le q < \infty$ and $\|\pi_n u_0\|_{V} \le \|u_0\|_V$ in the last step.
	Since $(\Phi(u_n(T,\bullet)),1)  \ge 0$, the last displayed inequality leads to $u_n \in L^{\infty}(0,T;V)$ and $\partial _t u_n \in L^{2}(0,T;L^2(\Omega))$. Moreover, since  inequality constant in $\lesssim$ is independent of $n\in \mathbb{N}$, the inclusions $(u_n)_{n \ge 1} \in L^{\infty}(0,T;H^{\rm m}_0(\Omega))$ and $(\partial _t u_n)_{n \ge 1} \in L^{2}(0,T;L^2(\Omega))$ are uniform. 
	
	\medskip \noindent \emph{Step 3. (Compactness).}  The above uniform estimates on $(u_n)_{n \geq 1}$ ensure the existence of a subsequence $(u_n)_{n \geq 1}$ and $u \in L^{\infty}(0,T;V)$ with $\partial_t u \in L^{2}(0,T;L^2(\Omega))$ such that
	\begin{align} \label{eqn:weak_convergences}
		u_n \xrightharpoonup {} u \;\; \mathrm{weakly}-\star \text{ in } L^{\infty}(0,T;V) \quad \text{and} \quad \partial_t u_n\xrightharpoonup {} \partial_t u \;\;\mathrm{weakly} \text{ in } L^{2}(0,T;L^2(\Omega)).
	\end{align}

	\medskip \noindent \emph{Step 4. (Convergence and existence).} Choose a function $v \in L^2(0,T;V)$ and define the orthogonal projection $v_n(t,\bullet) = \pi_n v(t,\bullet)$. Then $v_n$ converges strongly to $v$ in $L^2(0,T;V)$ (hence, $\|v_n\|_{L^2(0,T;V)} \rightarrow \|v\|_{L^2(0,T;V)}$) as $n \rightarrow \infty$ as a consequence of the Parseval's identity.   
	
	\medskip \noindent For sake of brevity, the time dependence of functions is suppressed in the sequel. For instance, $v_n(t,\bullet)$ is denoted by $v_n$. The convergence of the linear terms $\int_{0}^T (\partial_t u_n,v_n) \,\mathrm{d}t \rightarrow \int_{0}^T (\partial_t u,v) \,\mathrm{d}t$,  $\int_{0}^T\ellip(u_n,v_n)\,\mathrm{d}t \rightarrow \int_{0}^T\ellip(u,v)\,\mathrm{d}t$, and $\int_{0}^T(u_n,v_n)\,\mathrm{d}t \rightarrow \int_{0}^T(u,v)\,\mathrm{d}t$ follows from weak convergences in~\eqref{eqn:weak_convergences}. The convergence $\int_{0}^T(f,v_n)\mathrm{d}t \rightarrow \int_{0}^T(f,v)\mathrm{d}t$ follows from $v_n \rightarrow v$ strongly in $L^2(0,T;V)$. The remainder of this step establishes    
	\begin{align*}
		\int_{0}^T \int_{\Omega} u_n^{3} v_n\,\mathrm{d}x\mathrm{d}t \rightarrow \int_{0}^T \int_{\Omega} u^{3} v\,\mathrm{d}x\mathrm{d}t.
	\end{align*}
	Elementary algebra shows that 
	\begin{align*}
		\int_{0}^T \int_{\Omega} |u_n^{3} v_n - u^3 v|\,\mathrm{d}x\mathrm{d}t \le \int_{0}^T \int_{\Omega} |(u_n^{3} - u^3) v_n |\,\mathrm{d}x\mathrm{d}t + \int_{0}^T \int_{\Omega} |u^3(v_n - v) |\,\mathrm{d}x\mathrm{d}t.
	\end{align*}
	A H\"{o}lder inequality applied to the second term in the right hand side of the last displayed equation shows  
	\begin{align}
		\int_{0}^T \int_{\Omega} |u^3(v_n - v) |\,\mathrm{d}x\mathrm{d}t \le  \left( \int_{0}^T \|u\|_{L^6(\Omega)}^6\mathrm{d}t \right)^{1/2} \|v_n - v\|_{L^2(0,T;L^2(\Omega))} \lesssim   \|v_n - v\|_{L^2(0,T;L^2(\Omega))}
	\end{align}
	with the embedding $V \hookrightarrow L^6(\Omega)$ and $u \in L^{\infty}(0,T;V)$ in the last step. This and $\|v_n - v\|_{L^2(0,T;L^2(\Omega))} \rightarrow 0$ show $\int_{0}^T \int_{\Omega} |u^3(v_n - v) |\,\mathrm{d}x\mathrm{d}t \rightarrow 0$ as $n$ approaches infinity.

	\medskip \noindent In the following discussion, the convergence $\int_{0}^T \int_{\Omega} |(u_n^{3} - u^3) v_n |\,\mathrm{d}x\mathrm{d}t \rightarrow 0$ as $n \rightarrow \infty$ is proved. In Theorem~\ref{compactness theorem}, choose $X=V\;, Y=L^2(\Omega)\;\mathrm{and}\; Z=V^{\ast}$ to extract a subsequence of $(u_n)_{n \ge 1}$ such that 
	\begin{align*}
		u_n\rightarrow u\;\; \mathrm{strongly\; in}\;\; L^2(0,T;L^2(\Omega)).
	\end{align*}
	An application of the identity $a^3 - b^3 = (a-b)(a^2 + ab + b^2)$ shows 
	\begin{align*}
		\left|\int_{0}^T \int_{\Omega} (u_n^3 - u^3) v_n \,\mathrm{d}x\,\mathrm{d}t \right| \le{}& \int_{0}^T \int_{\Omega} |u_n - u| |u_n^2 + uu_n + u^2| |v_n|\,\mathrm{d}x\,\mathrm{d}t \\
		\le{}& \int_{0}^T \|u_n - u\| \|u_n^2 + uu_n + u^2\|_{L^4(\Omega)} \|v_n\|_{L^4(\Omega)}\,\mathrm{d}t. 
	\end{align*}
	The identity $(a^2+ab+b^2)^4 \le (81/2)(a^8+b^8)$ shows $\|u_n^2 + uu_n + u^2\|_{L^4(\Omega)} \lesssim (\|u_n\|_{L^8(\Omega)}^8 + \|u\|_{L^8(\Omega)}^8)^{1/4}$. This and the Sobolev embedding $V \hookrightarrow L^q(\Omega)$ for $1 \le q < \infty$ reveal 
	\begin{align} \label{eqn:roadblock_to_higherdimensions}
		\|u_n^2 + uu_n + u^2\|_{L^4(\Omega)} \lesssim (\|u_n\|_{V}^8 + \|u\|_{V}^8)^{1/4} \lesssim 1
	\end{align}
	uniformly for all $n$ and $0 < t < T$ as $u_n, u \in L^\infty(0,T;V)$.
	A similar Sobolev embedding shows that $\|v_n(t,\bullet) \|_{L^4(\Omega)} \lesssim \|v_n(t,\bullet)\|_{V}$ with a uniform constant for all $0 <  t < T$. A combination of these estimates yield 
	\begin{align*}
		\left|\int_{0}^T \int_{\Omega} (u_n^3 - u^3) v_n \,\mathrm{d}x\,\mathrm{d}t \right| \lesssim{}& \int_{0}^T \|u_n - u\|  \|v_n\|_{V}\,\mathrm{d}t \\ 
		\lesssim{}& \|u_n - u\|_{L^2(0,T;L^2(\Omega))} \|v_n\|_{L^2(0,T;V)}.
	\end{align*}
	The strong convergences $\|u_n - u\|_{L^2(0,T;L^2(\Omega))} \rightarrow 0$ and $\|v_n\|_{L^2(0,T;V)} \rightarrow \|v\|_{L^2(0,T;V)}$ show that the right hand side of the last displayed equation converges to zero. 
	
	\medskip 
	\medskip \noindent  These convergence results lead to 
	\begin{align} \label{eqn:SO_initial_cond_first}
		\int_{0}^T( \partial _tu,v)\;\dt+\int_{0}^T \ellip(u,v)\;\dt+\int_{0}^T(\phi(u),v)\;\dt =\int_{0}^T( f,v)\;\dt.
	\end{align}
	Since the last displayed equation holds for all $v \in L^2(0,T;V)$, for all $v \in V$ and almost every $0 < t < T$ it holds
	\begin{align} \label{eqn:uniqueness_so}
		( \partial _tu,v) + \ellip(u,v) + (\phi(u),v) = (f,v).
	\end{align} 
	
	\medskip \noindent \emph{Step 5. (Initial condition).} Choose $v\in C^{1}([0,T];V)$ such that $v(T,\bullet)=0$. The weak formulation~\eqref{eqn;nonlinearweakformulation} in the discrete space $V_n$ with $v_n(t,\bullet) = \pi_n v_n$ reveals
	\begin{align} \label{eqn:SO_initial_condiotion}
		\int_{0}^T \big((\partial _tu_n,v_n) + \ellip(u_n,v_n) + (\phi(u_n),v_n)\big)\;\dt=\int_{0}^T ( f,v_n)\;\dt.
	\end{align}
	Apply integration by parts to the first term in the left hand side of the last displayed equation to obtain 
	\begin{align*}
		\int_{0}^T\big( -(u_n,\partial_t v_n) + \ellip( u_n, v_n)+(\phi(u_n),v_n)\big)\;\dt=\int_{0}^T (f,v_n)\;\dt + (\pi_n u_0,v_n(0,\bullet)).
	\end{align*}
	Pass the limit $n \rightarrow \infty$ with $(\pi_n u_0,v_n(0,\bullet)) \rightarrow (u_0,v(0,\bullet))$ to arrive at   
	\begin{align}
		\int_{0}^T\big( -( u,\partial_t v) + \ellip(u,v) +(\phi(u),v) \big)\,\mathrm{d}t=\int_{0}^T (f,v)\;\dt + (u_0,v(0,\bullet)).
	\end{align}
	Since $u \in L^\infty(0,T;V) \hookrightarrow L^2(0,T;V)$ and $\partial_t u \in L^2(0,T;L^2(\Omega)) \hookrightarrow L^2(0,T;V^{\ast})$, an application of Lion--Magenes lemma (see Theorem~\ref{thm:gelfand}) implies $u \in C([0,T];L^2(\Omega))$. Therefore, the trace $u(0,\bullet) \in L^2(\Omega)$ is well defined. This and an integration by parts formula applied to the first term in the left hand side of~\eqref{eqn:SO_initial_cond_first} reveal
	\begin{align}\label{continous solution}
		\int_{0}^T \big( -(u,\partial_t v)+ \ellip(u,v)+(\phi(u),v) \big)\;\dt=\int_{0}^T(f,v)\dt + (u(0,\bullet),v(0,\bullet)).
	\end{align}
	A comparison between the last two displayed equations implies $(u(0,\bullet)-u_0,v(0,\bullet)) =0$ for all $v(0,\bullet) \in V$. Since $v(0,\bullet)$ is arbitrary, it follows $u(0,\bullet) = u_0$. 
	
	\medskip \noindent \emph{Step 6. (Uniqueness).}
	Supposing that $u_1$ and $u_2$ are two solution of~\eqref{time_dep_weak_form}, the function $z = u_1 - u_2$ satisfies
	$(\partial_t z,w)+\ellip(z,w)+(\phi(u_1) - \phi(u_2),w)=0$ for all $w \in V$. Choose $w = z(t,\bullet)$ to obtain
	\begin{align} \label{eqn:gronwall_prev}
		\frac{1}{2}\frac{\mathrm{d}}{\dt}\|z\|^2+ \|z\|_V^2 + (\phi(u_1) - \phi(u_2), u_1 - u_2) = 0.
	\end{align}
	Algebra shows that 
	\begin{align*}
		(\phi(u_1) - \phi(u_2), u_1 - u_2) = \int_{\Omega} (u_1 - u_2)^2(u_1^2 + u_1u_2 + u_2^2 - 1)\,\mathrm{d}x \ge  -\int_{\Omega} |z|^2\,\mathrm{d}x
	\end{align*}
	with $u_1^2 + u_1u_2 + u_2^2 \ge 0$ in the last step. This and~\eqref{eqn:gronwall_prev} imply 
	\begin{align*}
		\frac{1}{2}\frac{\mathrm{d}}{\dt}\|z\|^2 \le \|z\|^2.
	\end{align*}
	Then, an application of the Gronwall's inequality and $z(0,\bullet) = 0$ show $\|z(t,\bullet)\| = 0$, which leads to the uniqueness. \qed

	\begin{remark}
		Theorem~\ref{existencenonlinear} shows that a weak solution of~\eqref{eqn:main_problem_fourth-nonlinear} holds the regularity $u \in L^\infty(0,T;V)$ and $\partial_t u \in L^2(0,T;L^2(\Omega))$. Further, the Sobolev embedding $V \hookrightarrow L^q(\Omega)$ for $1 \le q < \infty$ shows $u \in L^\infty(0,T;L^q(\Omega))$. For $\mathrm{m} = 2$, the critical case $u \in L^\infty(0,T;L^\infty(\Omega))$ also holds. 
	\end{remark}
	
	
	\subsection{Improved regularity} \label{sec:improved_regularity_with_V}
	An application of the elliptic regularity allows for obtaining an improved regularity estimate on the weak solutions of~\eqref{eqn:main_problem_fourth-nonlinear}, which is presented in the next theorem. 
	
	\begin{thm}[improved regularity] 
		Suppose $u$ is a weak solution of~\eqref{eqn:main_problem_fourth-nonlinear} for $u_0 \in V$. Then, it holds $u \in L^2(0,T;H^{\mathrm{m}+\sigma}(\Omega))$ with $0 < \sigma \le 1$ that depends on the geometry of the domain. 
	\end{thm}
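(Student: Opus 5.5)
The plan is to freeze time and read the weak formulation~\eqref{eqn:uniqueness_so} as an elliptic boundary value problem for $u(t,\bullet)$, then apply elliptic regularity for $(-1)^{\mathrm{m}}\Delta^{\mathrm{m}}$ on the polygonal domain $\Omega$. For almost every $t \in (0,T)$, \eqref{eqn:uniqueness_so} says that $u(t,\bullet) \in V$ satisfies
\begin{align*}
	\ellip(u(t,\bullet),w) = (g(t,\bullet),w) \quad \text{for all } w \in V, \qquad g := f - \partial_t u - \phi(u).
\end{align*}
So $u(t,\bullet)$ is the weak solution of $(-1)^{\mathrm{m}}\Delta^{\mathrm{m}} u = g$ with the homogeneous boundary conditions~\eqref{eqn:homo_boundary}. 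The elliptic regularity result for polygonal domains gives $\sigma \in (0,1]$, depending on the interior angles of $\Omega$, such that any right hand side in $L^2(\Omega)$ (in fact $H^{-\mathrm{m}+\sigma}(\Omega)$ suffices) produces a solution in $H^{\mathrm{m}+\sigma}(\Omega)$ with
\begin{align*}
	\|u(t,\bullet)\|_{H^{\mathrm{m}+\sigma}(\Omega)} \lesssim \|g(t,\bullet)\|.
\end{align*}
I would take this as the standard Laplace/biharmonic regularity for polygons (Grisvard, Blum--Rannacher) and cite it.

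The remaining work is to show $g \in L^2(0,T;L^2(\Omega))$, and this is done term by term.
\begin{itemize}
	\item $f \in L^2(0,T;L^2(\Omega))$ holds by assumption.
	\item $\partial_t u \in L^2(0,T;L^2(\Omega))$ follows from Theorem~\ref{existencenonlinear}.
	\item For the nonlinearity, $\|\phi(u)\| \le \|u\|_{L^6(\Omega)}^3 + \|u\|$, and the embedding $V \hookrightarrow L^6(\Omega)$ gives $\|\phi(u(t,\bullet))\| \lesssim \|u(t,\bullet)\|_V^3 + \|u(t,\bullet)\|_V$. Since $u \in L^\infty(0,T;V)$, this puts $\phi(u) \in L^\infty(0,T;L^2(\Omega)) \subset L^2(0,T;L^2(\Omega))$.
\end{itemize}
Squaring the elliptic estimate and integrating over $(0,T)$ then gives
\begin{align*}
	\int_0^T \|u\|_{H^{\mathrm{m}+\sigma}(\Omega)}^2\,\dt \lesssim \|f\|_{L^2(0,T;L^2(\Omega))}^2 + \|\partial_t u\|_{L^2(0,T;L^2(\Omega))}^2 + T\,\|\phi(u)\|_{L^\infty(0,T;L^2(\Omega))}^2 < \infty.
\end{align*}

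One measurability point needs care: $t \mapsto u(t,\bullet) \in H^{\mathrm{m}+\sigma}(\Omega)$ must be strongly measurable. This holds because $u(t,\bullet) = \mathcal{S} g(t,\bullet)$, where $\mathcal{S}: L^2(\Omega) \to H^{\mathrm{m}+\sigma}(\Omega)$ is the bounded linear solution operator and $g$ is strongly measurable into $L^2(\Omega)$.

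The main obstacle is the elliptic regularity input itself. The value of $\sigma$ is dictated by the corner singularities of the Dirichlet Laplacian ($\mathrm{m}=1$) and the clamped biharmonic operator ($\mathrm{m}=2$). For convex polygons one gets $\sigma = 1$ when $\mathrm{m}=1$, while for non-convex polygons $\sigma < 1$ and is fixed by the largest interior angle. Everything else is bookkeeping once that shift theorem is cited for both values of $\mathrm{m}$.
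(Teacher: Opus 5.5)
Your proposal is correct and follows essentially the same route as the paper. You freeze $t$, read the weak form as $(-1)^{\mathrm{m}}\Delta^{\mathrm{m}}u = f - \partial_t u - \phi(u)$, apply the polygonal-domain shift theorem (the paper's Theorem~\ref{elliptic_regularity} with $s=\mathrm{m}-\sigma$), and bound the right-hand side in $L^2(0,T;L^2(\Omega))$ via $f,\partial_t u \in L^2(0,T;L^2(\Omega))$ and $\|\phi(u)\| \lesssim \|u\|_V^3 + \|u\|_V$ with $u \in L^\infty(0,T;V)$; your measurability remark through the bounded solution operator is a small addition the paper omits.
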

	\begin{proof}
		Elliptic regularity from Theorem~\ref{elliptic_regularity} with $s = \mathrm{m}  -\sigma$ establishes that the weak solution of the 
		\(\mathrm{m}\)-harmonic equation $(-1)^\mathrm{m} \Delta^\mathrm{m} u = G$ 
		satisfies the regularity estimate $\|u\|_{H^{\mathrm{m}+\sigma}(\Omega)} \le \mathrm{C}_{\mathrm{reg}}(\sigma)\, \|G\|_{H^{\sigma - \mathrm{m}}(\Omega)}$. The constants \(\mathrm{C}_{\mathrm{reg}}\) and $0 < \sigma \le 1$ only depend on the geometry of the domain \(\Omega\). The weak solution of~\eqref{eqn:main_problem_fourth-nonlinear} satisfies 
		\begin{align*}
			(-1)^\mathrm{m} \Delta^\mathrm{m} u = G = \phi(u) + f(t,\bullet) - \partial_t u.
		\end{align*}
		Therefore, it follows 
		\begin{eqnarray}\label{thm:reg_est}
			\norm{u(t,\bullet)}_{H^{\mathrm{m}+\sigma}(\Omega)} \le \mathrm{C}_{\rm reg}(\sigma)\norm{\phi(u) + f(t,\bullet) - \partial_t u}_{H^{\sigma - \mathrm{m}}(\Omega)} \lesssim  \mathrm{C}_{\rm reg}(\sigma)(\norm{\phi(u)} + \norm{f(t,\bullet)} + \norm{\partial_t u}).
		\end{eqnarray} 
		An application of the inequality $ab \le (a^2 + b^2)/2$ shows $\|\phi(u)\| \lesssim (\|u\|_{L^6(\Omega)}^6 + \|u\|^2)^{1/2}$. Then the Sobolev embedding $V \hookrightarrow L^q(\Omega)$ for $1 \le q < \infty$ implies $\|\phi(u)\| \lesssim (\|u\|_{V}^6 + \|u\|_V^2)^{1/2}$. This estimate reveals that 
		\begin{align*}
			\int_{0}^T  \norm{u(t,\bullet)}_{H^{\mathrm{m}+\sigma}(\Omega)}^2\,\mathrm{d}t \lesssim \|f\|_{L^2(0,T;L^2(\Omega))}^2 + \|\partial_t u\|_{L^2(0,T;L^2(\Omega))}^2 + \int_{0}^T (\|u\|_{V}^6 + \|u\|_V^2)\,\mathrm{d}t \lesssim 1
		\end{align*}
		with $u \in L^{\infty}(0,T;V)$ and $\partial_t u \in L^2(0,T;L^2(\Omega))$ from Theorem~\ref{existencenonlinear} and $f \in L^2(0,T;L^2(\Omega))$ in the last step.
	\end{proof}

	\begin{remark}[convex domains] If $\Omega$ is a convex polygonal domain, then $\sigma = 1$. In this case, the optimal regularity $u \in L^2(0,T;H^{\mathrm{m}+1}(\Omega))$ holds. 
	\end{remark}
	
	\begin{remark}[existence for the case $\Omega \subset \mathbb{R}^3$] For the case $\mathrm{m} = 2$, the proof readily extends to polyhedral domains in $\mathbb{R}^3$ with a Lipschitz continuous boundary $\partial \Omega.$ However, the case $\mathrm{m} = 1$ does not hold since the embedding $H^1_0(\Omega) \hookrightarrow L^8(\Omega)$ employed in~\eqref{eqn:roadblock_to_higherdimensions} is not true in $\mathbb{R}^3$. 
	\end{remark}
	
	\begin{remark}[extended Fisher--Kolmogorov equation]
		The analysis presented here for $\mathrm{m} = 2$ readily extends to Fisher--Kolmogorov equation $\partial_t u + \gamma \Delta^2 u - \Delta u + \phi(u) = f$ with very minor modifications as the Laplacian contribution is linear. The regularity of $u$ changes to $\gamma^{1/2} u \in L^\infty(0,T;V)$ and that of $\partial_t u$ remains the same.
	\end{remark}
	
	\section{Convergence analysis with minimal regularity on the initial condition} 
	\label{sec:rough_initial_data}
	Theorem~\ref{existencenonlinear} assumes the high regularity condition
	$u_0 \in V$. However, in many physical phenomena modelled
	by~\eqref{eqn:main_problem_fourth-nonlinear}, the initial datum need not satisfy this regularity. A typical example arises in a one-dimensional biphasic system, where a natural initial configuration is given by 
	\[
	u_0(x)=
	\begin{cases}
		1, & 0.25\le x\le 0.75,\\
		0, & \text{otherwise}.
	\end{cases}
	\]
	In the aforementioned example, the interval $[0.25,0.75]$ consists entirely of one phase, whereas the rest of the domain consists of the other phase (see Figure~\ref{fig:EFK_initial}). However, the numerical solution of the extended Fisher--Kolmogorov equation with this initial data is a smooth curve (see Figure~\ref{fig:EFK_final}). A similar behaviour is also demonstrated by the Fisher--Kolmogorov equation. The numerical solution of the Fisher--Kolmogorov equation with the rough initial data in Figure~\ref{fig:FK_initial} is a smooth front that connects the equilibrium states $u_h = 0$ and $u_h = 1$ (see Figure~\ref{fig:FK_final}). These numerical examples indicate the possible existence of smooth solutions even for rough initial data. 
	
	\begin{figure}[htbp]
		\centering
		
		\caption{Numerical solutions of Fisher--Kolmogorov and extended Fisher--Kolmogorov equation with a rough initial data.}
		\label{fig:FK_EFK_comparison}
		
		\vspace{0.5em}
		
		\setlength{\tabcolsep}{4pt}
		\renewcommand{\arraystretch}{1.2}
		
		\begin{tabular}{>{\centering\arraybackslash}m{0.04\textwidth}cc}
			
			\rotatebox{90}{\textcolor{blue}{Fisher--Kolmogorov}}
			&
			\begin{subfigure}[m]{0.44\textwidth}
				\centering
				\caption{Initial profile}
				\includegraphics[width=\textwidth]{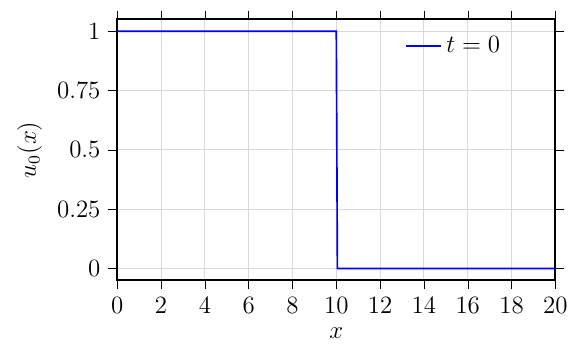}
				\label{fig:FK_initial}
			\end{subfigure}
			&
			\begin{subfigure}[m]{0.44\textwidth}
				\centering
				\caption{Numerical solution at final time $t = 2$}
				\includegraphics[width=\textwidth]{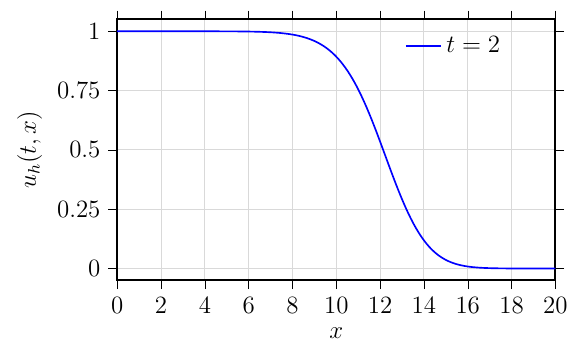}
				\label{fig:FK_final}
			\end{subfigure}
			\\[1.5em]
			
			\rotatebox{90}{\textcolor{magenta}{Ext. Fisher--Kolmogorov}}
			&
			\begin{subfigure}[m]{0.44\textwidth}
				\centering
				\caption{Initial profile}
				\includegraphics[width=\textwidth]{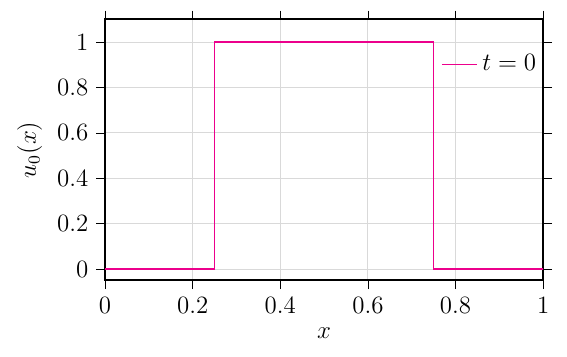}
				\label{fig:EFK_initial}
			\end{subfigure}
			&
			\begin{subfigure}[m]{0.44\textwidth}
				\centering
				\caption{Numerical solution at final time $t = 0.05$}
				\includegraphics[width=\textwidth]{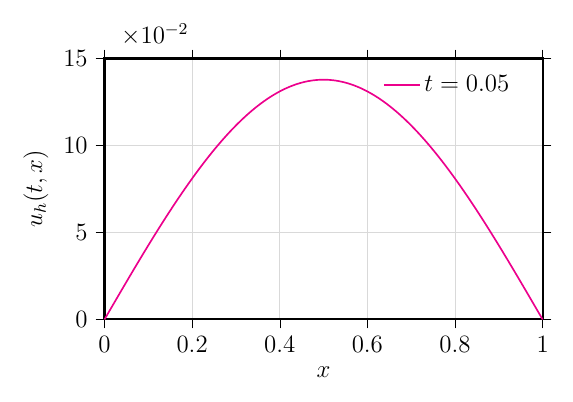}
				\label{fig:EFK_final}
			\end{subfigure}
			
		\end{tabular}
		
	\end{figure}

	\medskip \noindent Moreover, weak solutions arising from a rough initial data has a strong theoretical underpinning also. Assume that $H$ is a Hilbert space identified with its own dual $H \equiv H^{\ast}$ and the embedding $V \hookrightarrow H$ is continuous and dense; so that the embedding $H \hookrightarrow V^{\ast}$ is continuous. The chain of inclusions $V \hookrightarrow H \equiv H^{\ast} \hookrightarrow V^{\ast}$ is called a Gelfand triple. 
	\begin{thm}[Lion--Magenes Lemma~{\cite[Chapter 7]{Roubicek_nonlinearpde}}] \label{thm:gelfand}
		Suppose that  $V \hookrightarrow H \equiv H^{\ast} \hookrightarrow V^{\ast}$ is a Gelfand triple, $1 \le p \le \infty$ and $p^{\ast} = p/(p-1)$ is the conjugate exponent of $p$. Define the space 
		\begin{align*}
			W^{1,p,p^\ast}(0,T;V,V^{\ast}) = \left\{ u \in L^{p}(0,T;V):\; \dfrac{\mathrm{d}u}{\mathrm{d}t} \in L^{p^\ast}(0,T;V^{\ast}) \right\}
		\end{align*}
		Then, the embedding $W^{1,p,p^\ast}(0,T;V,V^{\ast}) \hookrightarrow C([0,T];H)$ is continuous.  
	\end{thm}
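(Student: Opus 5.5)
The plan is the classical one for the Lions--Magenes embedding: regularise in time, prove an energy identity for smooth functions, and pass to the limit using density. I treat the main case $2 \le p < \infty$ (so $1 < p^\ast \le 2$), where $p$ and $p^\ast$ are the dual exponents in the statement. The case $p = \infty$, $p^\ast = 1$ I would reduce to the case $p = 2$ after a restriction argument, and the case $p < 2$ follows by symmetry of the argument.

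\emph{Step 1 (extension and mollification).} Extend $u \in W^{1,p,p^\ast}(0,T;V,V^{\ast})$ to a slightly larger interval $(-\delta, T+\delta)$ by reflection about $t=0$ and $t=T$. This keeps $u \in L^p(V)$ and $\mathrm{d}u/\mathrm{d}t \in L^{p^\ast}(V^\ast)$, with norms controlled by those of $u$. Then mollify in time to obtain $u_\varepsilon \in C^1([0,T];V)$ with
\begin{align*}
u_\varepsilon \to u \text{ in } L^p(0,T;V), \qquad \mathrm{d}u_\varepsilon/\mathrm{d}t \to \mathrm{d}u/\mathrm{d}t \text{ in } L^{p^\ast}(0,T;V^\ast).
\end{align*}
For $p = \infty$ I would use local convergence only, or first embed into the case $p = 2$.

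\emph{Step 2 (energy identity and the key estimate).} For smooth $v = u_\varepsilon - u_\eta$, the Gelfand-triple identification $(h,w)_H = \langle h, w\rangle_{V^\ast,V}$ for $h \in H$ gives
\begin{align*}
\frac{\mathrm{d}}{\mathrm{d}t}\|v(t)\|_H^2 = 2\langle v'(t), v(t)\rangle_{V^\ast,V}.
\end{align*}
Integrating from $s$ to $t$ yields
\begin{align*}
\|v(t)\|_H^2 \le \|v(s)\|_H^2 + 2\|v'\|_{L^{p^\ast}(V^\ast)}\|v\|_{L^p(V)}.
\end{align*}
Next average over $s \in (0,T)$ and use $\|v(s)\|_H \lesssim \|v(s)\|_V$, together with $L^p(0,T) \hookrightarrow L^2(0,T)$ for $p \ge 2$. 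This gives
\begin{align*}
\sup_{t\in[0,T]}\|v(t)\|_H^2 \lesssim \|v\|_{L^p(V)}^2 + \|v'\|_{L^{p^\ast}(V^\ast)}\|v\|_{L^p(V)}.
\end{align*}
Applied to differences, this estimate shows that $(u_\varepsilon)$ is Cauchy in $C([0,T];H)$. Applied to $u_\varepsilon$ itself, it bounds $\|u_\varepsilon\|_{C([0,T];H)}$ by the $W^{1,p,p^\ast}$ norm of $u_\varepsilon$.

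\emph{Step 3 (passage to the limit).} The limit of $(u_\varepsilon)$ in $C([0,T];H)$ coincides with $u$ almost everywhere, because $u_\varepsilon \to u$ in $L^p(V)$. Hence $u$ has a continuous representative, and the bound passes to the limit, which gives continuity of the embedding.

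\emph{Main obstacle.} I expect Step 1 to be the delicate point: the reflection must preserve the time-derivative class, and the identification of the pairing $\langle \cdot,\cdot\rangle_{V^\ast,V}$ with $(\cdot,\cdot)_H$ on $V$ must be justified. Both are standard, and I would cite \cite[Chapter 7]{Roubicek_nonlinearpde} for these details.
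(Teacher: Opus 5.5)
The paper does not prove this lemma; it only cites Roub\'{\i}\v{c}ek's book. For $2 \le p < \infty$ your argument is the standard proof behind that citation and is correct: reflection, mollification, the identity $\frac{\mathrm{d}}{\mathrm{d}t}\|v\|_H^2 = 2\langle v', v\rangle$, and the supremum estimate applied to differences.

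The gaps are in the rest of the range $1 \le p \le \infty$, which the statement explicitly covers.

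\textbf{The case $1 \le p < 2$.} This does not follow ``by symmetry''. The pairs $(p,V)$ and $(p^\ast,V^\ast)$ play different roles, since the function lives in $V$ and its derivative in $V^\ast$. The one step where you used $p \ge 2$ genuinely fails for $p<2$: bounding $\frac{1}{T}\int_0^T \|v(s)\|_H^2\,\mathrm{d}s$ by $\|v\|_{L^p(0,T;V)}^2$ is not available there. The remedy is to take square roots before averaging in $s$. From $\|v(t)\|_H \le \|v(s)\|_H + (2\|v'\|_{L^{p^\ast}(0,T;V^\ast)}\|v\|_{L^p(0,T;V)})^{1/2}$ you get, for every $1 \le p \le \infty$,
\[
\sup_{t \in [0,T]} \|v(t)\|_H \le \frac{1}{T}\|v\|_{L^1(0,T;H)} + \bigl(2\|v'\|_{L^{p^\ast}(0,T;V^\ast)}\|v\|_{L^p(0,T;V)}\bigr)^{1/2}, \qquad \|v\|_{L^1(0,T;H)} \lesssim T^{1-1/p}\|v\|_{L^p(0,T;V)}.
\]

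\textbf{The endpoint cases.} For $p=\infty$, and likewise for $p=1$ with $p^\ast=\infty$, mollifiers do not converge in $L^\infty$, not even locally. So Step 1 fails as stated. Reducing $p=\infty$ to $p=2$ is also impossible, with or without restriction: $u' \in L^1(0,T;V^\ast)$ does not give $u' \in L^2(V^\ast)$ on any subinterval.

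What rescues the argument is the product structure of the estimate above, applied to $v = u_\varepsilon - u_\eta$. The first term tends to zero because mollifiers converge in $L^1(0,T;V)$. In the product, one factor tends to zero:
\begin{itemize}
\item[(i)] $\|u_\varepsilon' - u_\eta'\|_{L^1(0,T;V^\ast)}$ when $p = \infty$;
\item[(ii)] $\|u_\varepsilon - u_\eta\|_{L^1(0,T;V)}$ when $p = 1$.
\end{itemize}
The other factor stays bounded by twice the corresponding norm of $u$, because mollifying the reflected function does not increase $L^\infty$ norms. Hence $(u_\varepsilon)$ is still Cauchy in $C([0,T];H)$, and your Step 3 goes through unchanged.

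With these two modifications your proof covers the full range $1 \le p \le \infty$.
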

	An immediate consequence of Theorem~\ref{thm:gelfand} is that if $u \in W^{1,p,p^\ast}(0,T;V,V^{\ast})$, then $u \in C([0,T];H)$ and hence the trace $u(0,\bullet)$ is well-defined with $u(0,\bullet) \in H.$ In particular, Theorem~\ref{thm:gelfand} with $V = \Hm$ and $H = L^2(\Omega)$ shows $u \in C([0,T];L^2(\Omega))$, which leads to $u_0 = u(0,\bullet) \in L^2(\Omega).$ This suggests that the initial data $u_0$ need to possess only $L^2(\Omega)$ regularity.  
	
	\medskip \noindent Consequently, it is natural to expect the existence of a weak solution with an initial data in the space $L^2(\Omega)$, which is investigated in the sequel.  The remainder of this section is organised as follows. Section~\ref{sec:existence} presents a novel key lemma (Lemma~\ref{lemma:L1convergence}) that leads to the existence of a weak solution in Theorem~\ref{thm:low_regularity} with relaxed assumption $u_0 \in L^2(\Omega)$. Section~\ref{sec:initial_condition} establishes that the weak solution satisfies $u(0,\bullet) = u(\bullet)$ in $V^{\ast}$. The challenges in proving uniqueness of a weak solution is presented in Section~\ref{sec:lack_of_uniqueness}.

	\subsection{Existence of a weak solution} \label{sec:existence}
	\begin{lemma}[Key lemma] \label{lemma:L1convergence}
		Suppose that $E$ is an open and bounded set with $\mathrm{meas}(E) < \infty$. Let $(g_{n})_{n \ge 1}$ be a sequence of functions on $E$ such that $g_n  \xrightharpoonup{}  \widehat{g}$ weakly in $L^p(E)$ for $1 < p < \infty$ and $g_n \rightarrow g$ almost everywhere on $E$. Then, $g \in L^p(\Omega)$ and $g = \widehat{g}$ almost everywhere. 
	\end{lemma}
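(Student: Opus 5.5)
The plan is to prove that the weak limit and the pointwise limit coincide. The argument combines Fatou's lemma, to place $g$ in $L^p(E)$, with Egorov's theorem, to upgrade almost everywhere convergence to uniform convergence off a set of small measure.

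\emph{Step 1 ($g \in L^p(E)$).} A weakly convergent sequence in $L^p(E)$ is bounded, by the uniform boundedness principle. So $M := \sup_n \|g_n\|_{L^p(E)} < \infty$. Since $|g_n|^p \to |g|^p$ almost everywhere, Fatou's lemma gives
\begin{align*}
	\int_E |g|^p\,\mathrm{d}x \le \liminf_{n\to\infty} \int_E |g_n|^p\,\mathrm{d}x \le M^p .
\end{align*}
Hence $g \in L^p(E)$. The statement writes $L^p(\Omega)$, and I read it as $L^p(E)$.

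\emph{Step 2 (identification on good sets).} Fix $\delta > 0$. Since $\mathrm{meas}(E) < \infty$ and $g_n \to g$ almost everywhere, Egorov's theorem provides a measurable $F_\delta \subset E$ with $\mathrm{meas}(E \setminus F_\delta) < \delta$ and $g_n \to g$ uniformly on $F_\delta$.

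Take any $\psi \in L^\infty(E)$ supported in $F_\delta$. Uniform convergence on $F_\delta$ and $\mathrm{meas}(F_\delta) < \infty$ give
\begin{align*}
	\int_E (g_n - g)\psi\,\mathrm{d}x \le \|\psi\|_{L^\infty}\,\mathrm{meas}(E)\,\sup_{F_\delta}|g_n - g| \to 0 .
\end{align*}
On the other hand, $\psi \in L^{p^\ast}(E)$ because $E$ has finite measure. So weak convergence gives $\int_E g_n\psi \to \int_E \widehat g\,\psi$. Therefore $\int_{F_\delta} (g - \widehat g)\psi\,\mathrm{d}x = 0$ for every such $\psi$.

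Now choose $\psi = \mathrm{sgn}(g - \widehat g)\,\mathds{1}_{F_\delta}$. This yields $\int_{F_\delta} |g - \widehat g|\,\mathrm{d}x = 0$, so $g = \widehat g$ almost everywhere on $F_\delta$.

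\emph{Step 3 (conclusion).} Apply Step 2 with $\delta = 1/k$ for $k \in \mathbb{N}$. The set $\{g \neq \widehat g\}$ is then contained, up to a null set, in $E \setminus \bigcup_k F_{1/k}$. That set has measure at most $1/k$ for every $k$, hence measure zero. Therefore $g = \widehat g$ almost everywhere on $E$.

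\emph{Main obstacle.} The only delicate point is Step 2. A direct test of weak convergence against $g - \widehat g$ fails, because almost everywhere convergence gives no control of $\int g_n \psi$. Egorov's theorem on a finite-measure set is what bridges this gap, and that is exactly where $\mathrm{meas}(E) < \infty$ is used. The hypothesis $1 < p < \infty$ is used only to ensure boundedness and that $L^\infty(E) \subset L^{p^\ast}(E)$, so the test functions are admissible.

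An alternative route to the same identification: by the Vitali convergence theorem, boundedness in $L^p$ with $p > 1$ gives uniform integrability. This yields $g_n \to g$ strongly in $L^1(E)$, hence weakly in $L^1$. Uniqueness of weak limits in $L^1$ then forces $g = \widehat g$. This version also gives the $L^1$ convergence suggested by the lemma's label.
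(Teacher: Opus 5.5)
Your proof is correct, but your main argument differs from the paper's. Step 1 matches the paper (uniform boundedness plus Fatou). The identification step does not: the paper never uses Egorov. It first proves $g_n \to g$ strongly in $L^1(E)$ via Vitali's theorem. Equi-integrability comes from $\int_A |g_n|\,\mathrm{d}x \le M\,\mathrm{meas}(A)^{1/p^{\ast}}$, which is where $p>1$ enters. It then tests against an arbitrary $\psi \in L^{p^{\ast}}(E)$, split as $\psi\mathds{1}_{\{|\psi|>R\}} + \psi\mathds{1}_{\{|\psi|\le R\}}$. The tail is handled by H\"older and the uniform $L^p$ bound, and the bounded part by $R\|g_n - g\|_{L^1(E)}$. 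This is essentially your ``alternative route'', written with a truncation that is not really needed: testing against bounded $\psi$, as in your sign-function choice, already forces $g=\widehat g$ almost everywhere.

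Your Egorov route needs no equi-integrability at all. Steps 2--3 use only weak convergence, $\mathrm{meas}(E)<\infty$, and $L^\infty(E)\subset L^{p^{\ast}}(E)$, so they work verbatim for $p=1$. Step 1 even becomes optional, since $g=\widehat g$ almost everywhere already places $g$ in $L^p(E)$. In fact, neither of the two uses of $1<p<\infty$ listed in your closing remark actually requires $p>1$.

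In exchange, the paper's route yields extra information: $g_n\to g$ strongly in $L^1(E)$, and hence in $L^r(E)$ for $r<p$ by interpolation with the uniform $L^p$ bound. This is what the lemma's label alludes to. However, the application to $u_n^3$ in Theorem~\ref{thm:low_regularity} uses only the identification $\widehat g = g$, which your argument delivers more cheaply.
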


	\medskip \noindent \emph{Proof of Lemma~\ref{lemma:L1convergence}}.  An application of the uniform boundedness principle (see Theorem~\ref{thm:uniform_boundedness}) and the weak convergence $g_n  \xrightharpoonup{}  \widehat{g}$ imply that $\sup_{n \ge 1} \|g_{n}\|_{L^p(E)} \lesssim 1$. This, the almost everywhere convergence $g_n \rightarrow g$, and the Fatou's lemma (see Theorem~\ref{thm:fatou}) reveal
	\begin{align*}
		\int_{E} |g|^{p}\,\mathrm{d}x = \int_{E} \liminf_{n \rightarrow \infty} |g_n|^p\,\mathrm{d}x \le   \liminf_{n \rightarrow \infty} \int_{E} |g_n|^p\,\mathrm{d}x \lesssim 1.
	\end{align*}
	This concludes the proof of $g \in L^p(E).$ 
	
	\medskip \noindent 
	In what follows, we establish that $\int_{E} (\widehat{g} - g )\psi\,\mathrm{d}x = 0$ for all $\psi \in L^q(E)$ with the conjugate exponent $q = p/(1-p).$ Fix a $\psi \in L^q(\Omega)$, $\varepsilon > 0$, and an $R > 0$ which will specified later in the proof. Decompose $\psi$ as  
	\begin{align*}
		\psi = \psi^R + \psi_{R} \quad \text{ with }\quad \psi^{R} = \psi\cdot\mathds{1}_{\{|\psi| > R\}} \;\; \text{ and }\;\; \psi_{R} = \psi\cdot\mathds{1}_{\{|\psi| \le R\}} ,
	\end{align*}
	wherein $\mathds{1}_{F}$ is the indicator function of the set $F \subset E$. Then, split the integral $\int_{E} (\widehat{g} - g)\psi\,\mathrm{d}x$ as 
	\begin{align*}
		\int_{E} (\widehat{g} -g)\psi\,\mathrm{d}x =  \int_{E} (\widehat{g} - g_n)\psi\,\mathrm{d}x + \int_{E} (g_n - g)\psi^R\,\mathrm{d}x +  \int_{E} (g_n - g)\psi_R\,\mathrm{d}x =: \mathrm{I}_1 + \mathrm{I}_2 + \mathrm{I}_3.
	\end{align*}
	Since $g_n \xrightharpoonup{} g$ weakly in $L^p(E)$ and $\psi \in L^q(E)$, it follows that $\mathrm{I}_1 \rightarrow 0$ as $n \rightarrow \infty$. Therefore, for some sufficiently large $n_1 \in \mathbb{N}$, it holds $\mathrm{I}_1 \le \varepsilon/3$. A H\"{o}lder inequality shows 
	\begin{align} \label{eqn:I2}
		|\mathrm{I}_2| \le \|g_n - g\|_{L^p(E)} \|\psi^{R}\|_{L^q(E)} \lesssim \|\psi^{R}\|_{L^q(E)}
	\end{align}
	with the triangle inequality $\|g_n - g\|_{L^p(E)} \le \|g_n\|_{L^p(E)} + \|g\|_{L^p(E)}$ and $\sup_{n} \|g_n\|_{L^p(E)} \lesssim 1$ in the last step. Since $\psi \in L^q(\Omega)$, an application of the dominated convergence theorem (see Theorem~\ref{thm:DCT}) shows that $\|\psi^{R}\|_{L^q(E)}^q = \int_{\{\psi > R\}} |\psi|^q\,\mathrm{d}x$ converges to zero as $R$ approaches to infinity. Therefore, for a given $\varepsilon > 0$,  it is possible to set $R = n_2$ and for a sufficiently large $n_2 \in \mathbb{N}$,  it holds $|\mathrm{I}_2| < \varepsilon/3$.
	
	\medskip \noindent With the $R$ from the previous step, the term $I_\mathrm{3}$ can be estimated as 
	\begin{align*}
		|\mathrm{I}_3| \le \|\psi_{R}\|_{L^\infty(E)}\|g_n - g\|_{L^1(E)} \le R \|g_n - g\|_{L^1(E)} 
	\end{align*}
	with $\|\psi_{R}\|_{L^\infty(E)} \le R$ in the last step. The remainder of this step proves $\|g_n - g\|_{L^1(E)}$  converges to zero as $n$ approaches to infinity.  This is achieved through Vitali's convergence theorem (see Theorem~\ref{thm:vitali} with $p = 1$); the hypotheses to be verified are
	\begin{itemize}
		\item[(a)] for a given $\epsilon > 0$, there exists a corresponding $\delta > 0$ such that $\int_{A} |g_n|\,\mathrm{d}x \lesssim \epsilon$ for all $n$ and for all $A \subset E$ measurable with $\mathrm{meas}(A) < \delta$.
		\item[(b)] $g_n \rightarrow g$ almost everywhere.
	\end{itemize}
	Suppose that $A \subset E$ is a measurable set. An application of H\"{o}lder inequality shows
	\begin{align*}
		\int_{A} |g_n|\,\mathrm{d}x \le \|g_{n}\|_{L^p(E)} (\mathrm{meas}(A))^{1/q} \lesssim (\mathrm{meas}(A))^{1/q}
	\end{align*}
	with $\sup_n  \|g_{n}\|_{L^p(E)} \lesssim 1$. This and the choice $\delta = \epsilon^{q}$ shows $\int_{A} |g_n|\,\mathrm{d}x \lesssim \epsilon$, which verifies hypothesis (a). Hypothesis (b) follows from the common hypotheses of Lemma~\ref{lemma:L1convergence}. Therefore, Vitali's convergence theorem guarantees $\|g_n - g\|_{L^1(E)} \rightarrow 0$ as $n$ tends to infinity. Therefore, for any given $\varepsilon > 0$, it is possible to choose $n_3 \in \mathbb{N}$ sufficiently large such that $R \|g_n - g\|_{L^1(E)} \le \varepsilon/3$. This implies $\mathrm{I}_3 \le \varepsilon/3$.
	
	\medskip \noindent A combination of previous estimates reveals that, for all $n \ge \max(n_1,n_2,n_3)$ it holds $\left| \int_{E} (\widehat{g} -g)\psi\,\mathrm{d}x 
	\right| \le \varepsilon$. Proof concludes as $\varepsilon > 0$ is arbitrary.  \qed

	\begin{thm} \label{thm:low_regularity}
		Suppose that $u_0 \in L^2(\Omega)$. Then, there exists a function 
		$u \in L^\infty(0,T;L^2(\Omega)) \cap L^2(0,T;V) \cap L^4(0,T;L^4(\Omega))$ and $\partial_t u \in L^{4/3}(0,T;V^{\ast})$ such that for all $v \in V$ and almost every $t \in (0,T)$, it holds 
		\begin{align} \label{eqn:weak_form_less_regularity}
			\langle \partial_t u, v \rangle + \ellip(u,v) + (\phi(u),v) = (f(t,\bullet),v)
		\end{align}
		with the duality pairing $\langle \bullet, \bullet \rangle$ between $V^{\ast}$ and $V$.
	\end{thm}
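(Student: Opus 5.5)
The plan is to reuse the Faedo--Galerkin scheme of Step~1 in the proof of Theorem~\ref{existencenonlinear}. The only change is the initial condition: take $u_n(0,\bullet) = P_n u_0$, where $P_n$ is the $L^2(\Omega)$-orthogonal projection onto $V_n$, so that $\|P_n u_0\| \le \|u_0\|$ and $P_n u_0 \to u_0$ in $L^2(\Omega)$. Since $u_0 \notin V$, testing with $\partial_t u_n$ is no longer possible. Instead, set $\widehat{w} = u_n$ in~\eqref{eqn:all}. This gives
\begin{align*}
\frac12 \frac{\mathrm{d}}{\mathrm{d}t}\|u_n\|^2 + \|u_n\|_V^2 + \|u_n\|_{L^4(\Omega)}^4 = \|u_n\|^2 + (f,u_n).
\end{align*}
Young's inequality and Gronwall's inequality then yield bounds on $(u_n)$ that are uniform in $n$ in $L^\infty(0,T;L^2(\Omega)) \cap L^2(0,T;V) \cap L^4(0,T;L^4(\Omega))$. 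These bounds also give global existence of the ODE solution.

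For the time derivative, take $v \in L^4(0,T;V)$ and test the Galerkin equation with $\pi_n v$. Because the $w_j$ are eigenfunctions, $\pi_n$ coincides with $P_n$ and is stable in both $V$ and $L^2(\Omega)$. The pairing is controlled by
\begin{align*}
|(u_n^3,\pi_n v)| \le \|u_n\|_{L^4(\Omega)}^3 \|\pi_n v\|_{L^4(\Omega)} \lesssim \|u_n\|_{L^4(\Omega)}^3 \|v\|_V .
\end{align*}
Here $\|u_n\|_{L^4}^3$ lies in $L^{4/3}(0,T)$, while the linear terms lie in $L^2(0,T) \subset L^{4/3}(0,T)$. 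Duality therefore gives a uniform bound on $\partial_t u_n$ in $L^{4/3}(0,T;V^\ast)$. By Banach--Alaoglu, a subsequence satisfies $u_n \rightharpoonup u$ weakly in $L^2(0,T;V)$ and in $L^4(0,T;L^4(\Omega))$, weakly-$\star$ in $L^\infty(0,T;L^2(\Omega))$, and $\partial_t u_n \rightharpoonup \partial_t u$ in $L^{4/3}(0,T;V^\ast)$. Theorem~\ref{compactness theorem} (Aubin--Lions) with $X = V$, $Y = L^2(\Omega)$, $Z = V^\ast$ and exponents $2$ and $4/3$ gives $u_n \to u$ strongly in $L^2(0,T;L^2(\Omega))$. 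A further subsequence then converges almost everywhere on $\Omega_T$.

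The main obstacle is the cubic term. The argument of Step~4 in Theorem~\ref{existencenonlinear} relied on $L^\infty(0,T;V)$ bounds, which are no longer available. This is where Lemma~\ref{lemma:L1convergence} enters, with $E = \Omega_T$ and $p = 4/3$. The sequence $u_n^3$ is bounded in $L^{4/3}(\Omega_T)$, so a subsequence converges weakly to some $\chi$. Almost everywhere convergence of $u_n$ gives $u_n^3 \to u^3$ almost everywhere, and the lemma identifies $\chi = u^3$.

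Now fix $v \in L^4(0,T;V)$, which embeds into $L^4(0,T;L^4(\Omega))$. The Galerkin identity holds with $v_n = \pi_n v$, and $v_n \to v$ strongly in $L^4(0,T;V)$. Each term can then be passed to the limit as a weak--strong product:
\begin{itemize}
\item $\int_0^T \langle \partial_t u_n, v_n\rangle\,\mathrm{d}t$ converges, since $\partial_t u_n$ is weak in $L^{4/3}(0,T;V^\ast)$ and $v_n$ is strong in $L^4(0,T;V)$;
\item $\int_0^T \ellip(u_n,v_n)\,\mathrm{d}t$ and $\int_0^T (u_n,v_n)\,\mathrm{d}t$ converge, since $u_n$ is weak in $L^2(0,T;V)$ and $v_n$ is strong;
\item $\int_0^T (u_n^3,v_n)\,\mathrm{d}t$ converges, since $u_n^3$ is weak in $L^{4/3}(\Omega_T)$ and $v_n$ is strong in $L^4(\Omega_T)$.
\end{itemize}
This gives the integrated identity for all $v \in L^4(0,T;V)$. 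Choosing $v = \eta(t) w$ with $\eta \in C_0^\infty(0,T)$ and $w$ ranging over a countable dense subset of $V$ then yields~\eqref{eqn:weak_form_less_regularity} for almost every $t$ and all $v \in V$.
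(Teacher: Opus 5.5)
Your proposal is correct and follows the paper's proof essentially step by step. The common steps are: the energy estimate obtained by testing with $u_n$, the duality bound giving $\partial_t u_n$ uniformly in $L^{4/3}(0,T;V^{\ast})$, Aubin--Lions with $X=V$, $Y=L^2(\Omega)$, $Z=V^{\ast}$, and the identification of the weak limit of $u_n^3$ in $L^{4/3}(\Omega_T)$ via Lemma~\ref{lemma:L1convergence}.

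The differences are cosmetic. You use Gronwall where the paper absorbs $\|u_n\|^2$ into $(u_n^4,1)$ by Young. You test with $\pi_n v$ for $v \in L^4(0,T;V)$ where the paper uses $\chi\eta_j$ plus density. Your choice of $L^4(0,T;V)$ as test space, together with passing to an a.e.\ convergent subsequence, is if anything slightly more careful than the paper's appeal to density in $L^2(0,T;V)$.
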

	\medskip \noindent \emph{Proof of Theorem~\ref{thm:low_regularity}. }
	The initial step proceeds exactly as in the proof of Theorem~\ref{existencenonlinear} and is therefore omitted for brevity. The remainder of the proof is organized into four steps. Step~1 establishes the discrete energy estimates. Step~2 shows that $\partial_t u_n$ belongs to $L^{4/3}(0,T;V^{\ast}$). Step~3 is devoted to compactness and subsequential convergences. In Step~4, the passing the limit arguments are carried out, leading to the existence of a weak solution.
	
	\medskip \noindent \emph{Step 1. (Discrete energy estimates).} Set $v_n=  u_n$ in~\eqref{eqn;nonlinearweakformulation}, which then lead to 
	\begin{align} \label{eqn:low_regular_main_equation}
		\dfrac{1}{2} \dfrac{\mathrm{d}}{\mathrm{d}t} \norm{u_n}^2 + \|u_n\|_V^2 + (u_n^4,1) = ( f(t,\bullet),u_n )   + \norm{u_n}^2
	\end{align}
	with $\phi(u_n) = u_n^3 - u_n$ in the last step. The Young's inequality~\eqref{ineq:youngs} shows $( f(t,\bullet),u_n ) \lesssim \|f(t,\bullet)\|^2/2\varepsilon + \varepsilon \|u_n\|^2/2$ and $\norm{u_n}^2 \le \eta (u_n^4,1)/2 + {\rm meas}(\Omega)/2\eta$ with $0 < \epsilon, \eta \le 1$. This and the last displayed equation reveals
	\begin{align} \label{eqn:intermediate_bound}
		\dfrac{1}{2} \dfrac{\mathrm{d}}{\mathrm{d}t} \norm{u_n}^2 + \left(1 - \dfrac{\mathrm{C}^2\varepsilon}{2}\right)\|u_n\|_V^2 + \left(1 - \dfrac{\eta}{2}\right)(u_n^4,1) \le \dfrac{1}{2\varepsilon} \|f(t,\bullet)\|^2 + \dfrac{{\rm meas}(\Omega)}{2\eta} 
	\end{align}
	with $\|u_n\| \le  \mathrm{C}\|u_n\|_{V}$ in the last step. Integrate~\eqref{eqn:intermediate_bound} with a sufficiently small choice of $\eta$ and $\epsilon$ to arrive at
	\begin{align*}
		\norm{u_n(T,\bullet)}^2 + \int_{0}^T \left(\|u_n\|_V^2 + \|u_n\|_{L^4(\Omega)}^4 \right)\,\mathrm{d}t \lesssim  \int_{0}^T \|f(t,\bullet)\|^2\,\mathrm{d}t + {\rm meas}(\Omega)T + \norm{\pi_n u_0}^2.
	\end{align*}
	The observation $\norm{\pi_n u_0} \lesssim \norm{u_0}$ and the last equation supply the uniform inclusions (independent of $n$)
	\begin{align} \label{eqn:uniform_bounds}
		(u_n)_{n \ge 1} \in L^\infty(0,T;L^2(\Omega)) \cap L^2(0,T;V) \cap L^4(0,T;L^4(\Omega)).
	\end{align}
	
	\medskip \noindent \emph{Step 2. (Estimate for time derivative).} For a test function $v \in V$, Theorem~\ref{thm:orthogonal_projection} allows for the unique split $v = v_{n} + v_{n}^{\perp}$ with $v_n \in V_n$ and $v_{n}^{\perp} \in V_{n}^{\perp}$. Since $\partial_t u_n \in V_n$, it follows $(\partial_t u_n,v) = (\partial_t u_n, v_n)$. This and~\eqref{eqn;nonlinearweakformulation} defines the map $\langle \partial_t u_n, \bullet \rangle : V \rightarrow \mathbb{R}$ by 
	\begin{align} \label{eqn:time_1}
		\langle \partial_t u_n, v \rangle = (\partial_t u_n,v) = (\partial_t u_n, v_n) = (f(t,\bullet),v_n) - \ellip(u_n,v_n) - (u_n^3 - u_n,v_n).
	\end{align}
	A H\"{o}lder inequality and the embedding $V \hookrightarrow L^2(\Omega)$ from Theorem~\ref{thm:orthogonal_projection} show 
	\begin{align}  \label{eqn:time_2}
		(f(t,\bullet),v_n) - \ellip(u_n,v_n) + (u_n,v_n) \le \left( \norm{f(t,\bullet)} + 2\|u_n\|_V \right) \|v_n\|_V.   
	\end{align}
	Similarly, a H\"{o}lder inequality shows $(u_n^3,v_n) \le \|u_n\|_{L^4(\Omega)}^{3} \|v_{n}\|_{L^4(\Omega)} \lesssim \|u_n\|_{L^4(\Omega)}^{3} \|v_{n}\|_{V}$ with the embedding $V \hookrightarrow L^4(\Omega)$ in the last step. The estimates~\eqref{eqn:time_1}--\eqref{eqn:time_2} and $\|v_n\|_V \le \|v\|_V$ from Theorem~\ref{thm:orthogonal_projection} reveal
	\begin{align*}
		\langle \partial_t u_n, v \rangle \lesssim \left( \norm{f(t,\bullet)} + 2\|u_n\|_V + \|u_n\|_{L^4(\Omega)}^{3} \right) \|v\|_V, 
	\end{align*}
	which leads to $\|\partial_t u_n\|_{V^{\ast}} \le \norm{f(t,\bullet)} + 2\|u_n\|_V + \|u_n\|_{L^4(\Omega)}^{3}.$ This and the inequality $(a+b+c)^{4/3} \lesssim a^{4/3} + b^{4/3} + c^{4/3}$ with $a,b,c \ge 0$ yields $\|\partial_t u_n\|_{V^{\ast}}^{4/3} \lesssim \norm{f(t,\bullet)}^{4/3} + \|u_n\|_V^{4/3} + \|u_n\|_{L^4(\Omega)}^{4}$. Integrate this over the temporal domain $(0,T)$ and use the embedding $L^{2}(0,T;L^2(\Omega)) \hookrightarrow L^{4/3}
	(0,T;L^2(\Omega))$ to arrive at 
	\begin{align*}
		\int_{0}^T \|\partial_t u_n\|_{V^{\ast}}^{4/3}\,\mathrm{d}t \lesssim \|f\|_{L^2(0,T;L^2(\Omega))}^{4/3} +  \|u_n\|_{L^2(0,T;V)}^{4/3} + \|u_n\|_{L^4(0,T;L^4(\Omega))}^4 \lesssim 1
	\end{align*}
	from~\eqref{eqn:uniform_bounds} and $\|f\|_{L^2(0,T;L^2(\Omega))} \lesssim 1$. This shows that 
	\begin{align} \label{eqn:compactness_2}
		\partial_t u_n \in L^{4/3}(0,T;V^{\ast})
	\end{align}
	uniformly. 
	
	\medskip \noindent \emph{Step 3. (Compactness).} The uniform bounds in~\eqref{eqn:uniform_bounds} and~\eqref{eqn:compactness_2} allow for the extraction of a subsequence $(u_n)_{n \ge 1}$ and $u \in L^\infty(0,T;L^2(\Omega)) \cap L^2(0,T;V) \cap L^4(0,T;L^4(\Omega))$ with $\partial_t u \in L^{4/3}(0,T;V^{\ast})$ such that 
	\begin{gather} \label{eqn:weak_convegences_1}
		u_n \xrightharpoonup {} u \;\; \mathrm{weakly}-\star \text{ in } L^{\infty}(0,T;L^2(\Omega)), \;\; u_n\xrightharpoonup {} u \;\;\mathrm{weakly} \text{ in } L^{2}(0,T;V), \quad \text{and} \quad  \\ \label{eqn:weak_convergence_2}
		\partial_t u_n\xrightharpoonup {} \partial_t u \;\;\mathrm{weakly} \text{ in } L^{4/3}(0,T;V^{\ast}).
	\end{gather}
	Recall that $(u_n)_{n \ge 1} \subset L^2(0,T;V)$ and $(\partial_t u_n)_{n \ge 1} \subset L^{4/3}(0,T;V^{\ast})$. This and an application of Theorem~\ref{compactness theorem} with $X = V$, $Y = L^2(\Omega)$, $Z = V^{\ast}$ and $p = 4/3$ show that there exists a subsequence $(u_n)_{n \ge 1}$ and $u \in L^2(0,T;L^2(\Omega))$ such that 
	\begin{align*}
		u_n \rightarrow u \;\;\text{strongly}\;\;\text{in} \quad L^2(0,T;L^2(\Omega)).
	\end{align*}
	
	\medskip \noindent \emph{Step 3. (Convergence and existence).} Consider a test function of the form $\chi\eta_j $ with $\eta_j \in V_n$ and $\chi \in C^{\infty}(0,T).$ The general argument follows from density of $V_T = \left\{\chi\eta \;:\;\eta \in \cup_n V_n,\; \chi \in C^{\infty}(0,T) \right\}$ in $L^2(0,T;V).$ 
	
	\medskip\noindent  Convergence of $\int_{0}^T (\partial_t u_n,\eta_j)\chi\mathrm{d}t$  to $\int_{0}^T \langle \partial_t u,\eta_j \rangle \chi\mathrm{d}t$ follows from~\eqref{eqn:weak_convergence_2}. The weak convergences in~\eqref{eqn:weak_convegences_1} reveal  $\int_{0}^T \ellip(u_n,\eta_j)\chi\mathrm{d}t\rightarrow \int_{0}^T \ellip(u,\eta_j)\chi\mathrm{d}t$ and $\int_{0}^T ( u_n,\eta_j)\chi\mathrm{d}t \rightarrow \int_{0}^T ( u,\eta_j)\chi\mathrm{d}t$. 
	
	\medskip \noindent The remainder of this step endeavours in establishing 
	\begin{align} \label{eqn:weak_convegence_3}
		u_n^3 \xrightharpoonup{} u^3 \quad \text{ weakly in } \quad  L^{4/3}(0,T;L^{4/3}(\Omega)).
	\end{align}
	Since $u_n \rightarrow u$ strongly in $L^2(0,T;L^2(\Omega)),$ it follows $u_n \rightarrow u$ almost everywhere on $\Omega_T$. Since the mapping $x \mapsto x^3$ is continuous, $u_n^3 \rightarrow u^3$ almost everywhere on $\Omega_T$. Since $(u_n)_{n \ge 1} \subset L^4(0,T;L^4(\Omega))$, it follows $(u_n^3)_{n \ge 1} \subset L^{4/3}(0,T;L^{4/3}(\Omega))$. Therefore, Banach-Alaoglu theorem (see Theorem~\ref{thm:alaoglu}) establishes $u_n^3 \xrightharpoonup{} \widehat{u}$ weakly in $L^{4/3}(0,T;L^{4/3}(\Omega))$ for some $\widehat{u} \in L^{4/3}(0,T;L^{4/3}(\Omega))$. An application of Lemma~\ref{lemma:L1convergence} with $E = \Omega_{T}$, $g_n = u_n^3$, $p = 4/3$, and $\widehat{g} = \widehat{u}$ conclusively proves $\widehat{u} = u^3$ almost everywhere on $\Omega_T$. This completes proof of \eqref{eqn:weak_convegence_3}.

	\medskip \noindent As a consequence of~\eqref{eqn:weak_convegence_3}, it follows that
	\begin{align*}
		\int_{0}^T (u_n^3, \eta_j)\chi\mathrm{d}t \rightarrow \int_{0}^T (u^3, \eta_j)\chi\mathrm{d}t
	\end{align*}
	as $\eta_j\chi \in L^4(0,T;V)$, which will conclude the passage of limit. This and the aforementioned density establish for all $v \in V$ and almost every $t \in (0,T)$ 
	\begin{align*}
		\langle \partial_t u, v\rangle + \ellip(u,v) + (\phi(u),v) = (f(t,\bullet),v),
	\end{align*}
	which concludes the proof of existence. \qed
	
	\subsection{Initial condition} \label{sec:initial_condition}
	For $v_n \in V_n$ and $\chi \in C^{\infty}([0,T))$ with $\chi(T) = 0$, the Faedo--Galerkin approximation $u_n$ satisfies 
	\begin{align*}
		\int_{0}^T ((\partial_t u_n, v_n) + \ellip(u_n,v_n) + (\phi(u_n),v_n))\chi\,\mathrm{d}t = \int_{0}^T (f(t,\bullet),v_n)\chi\,\mathrm{d}t.
	\end{align*}
	An integration by parts applied to the first term in the left hand side of the last displayed equation and passing to the limit arguments show 
	\begin{align} \label{eqn:compare_1}
		-\int_{0}^T(u, v)\chi'\mathrm{d}t + \int_{0}^T  (\ellip(u,v) +(\phi(u_n),v))\chi\,\mathrm{d}t = \int_{0}^T (f(t,\bullet),v)\chi\,\mathrm{d}t + (u_0,v)\chi(0).
	\end{align}
	Further the weak solution $u$ satisfies 
	\begin{align*}
		\int_{0}^T \left(\langle \partial_t u, v \rangle  + \ellip(u,v) + (\phi(u),v)\right)\chi\,\mathrm{d}t = \int_{0}^T (f(t,\bullet),v)\chi\,\mathrm{d}t
	\end{align*}
	Theorem~\ref{thm:V*embedding} with $X = V$, $Y = V^{\ast}$, $p = 2$, $q = 4/3$ shows $u \in C([0,T];V^{\ast})$. Therefore, the trace $u(0,\bullet)$ exists in $V^{\ast}$. Consequently, an integration by parts formula applied to $\int_{0}^T \langle \partial_t u, v \rangle\,\mathrm{d}t$ in the last displayed equation reveals
	\begin{align} \label{eqn:compare_2}
		-\int_{0}^T(u, v)\chi'\mathrm{d}t + \int_{0}^T  (\ellip(u,v) + (\phi(u),v)) \chi\,\mathrm{d}t = \int_{0}^T (f(t,\bullet),v)\chi\,\mathrm{d}t + \langle u(0,\bullet),v \rangle\chi(0).
	\end{align}
	Comparing~\eqref{eqn:compare_1} and~\eqref{eqn:compare_2},  and since $\chi(0)$ is arbitrary, it follows that for all $v \in V$,  $\langle u(0,\bullet) - u_0, v \rangle = 0$ which leads to $u(0,\bullet) = u_0$ in $V^{\ast}$.
	
	\subsection{Challanges in uniqueness} \label{sec:lack_of_uniqueness}
	
	Suppose that $u_1$ and $u_2$ are two weak solutions in the sense
	of~\eqref{eqn:weak_form_less_regularity}, and let $z = u_1 - u_2$.
	Then $z$ satisfies
	\begin{align}
		\langle \partial_t z, w \rangle
		+ \mathcal{A}_\mathrm{m}(z, w)
		+ \bigl(\phi(u_1) - \phi(u_2),\, w\bigr)
		= 0
		\qquad \text{ for all }\, w \in V,\ \text{a.e.}\ t \in (0,T).
		\label{eqn:diff_eq}
	\end{align}
	The natural approach to establish uniqueness is to substitute $w = z$
	in~\eqref{eqn:diff_eq}, which would yield the energy identity
	\begin{align}
		\frac{1}{2}\frac{\mathrm{d}}{\mathrm{d}t}\|z\|^{2}
		+ \mathcal{A}_\mathrm{m}(z, z)
		+ \bigl(\phi(u_1) - \phi(u_2),\, z\bigr)
		= 0,
		\label{eqn:energy_diff}
	\end{align}
	upon invoking the identity
	\begin{align}
		2\langle \partial_t z,\, z \rangle
		= \frac{\mathrm{d}}{\mathrm{d}t}\|z\|^{2}.
		\label{eqn:int_by_parts}
	\end{align}
	However, \eqref{eqn:int_by_parts} is valid only when the map
	$t \mapsto \|z(t,\,\bullet\,)\|^{2}$ is absolutely continuous,
	which in turn requires the duality product
	$\langle \partial_t z,\, z \rangle$ to belong to
	$L^{1}(0,T)$.
	By the duality estimate and H\"{o}lder's inequality, a sufficient
	condition for this integrability is
	\begin{align*}
		\partial_t z \in L^{p}(0,T;\,V^{\ast})
		\quad \text{and} \quad
		z \in L^{p*}(0,T;\,V),
		\qquad \frac{1}{p} + \frac{1}{p*} = 1.
	\end{align*}
	The present analysis yields $z \in L^{2}(0,T;\,V)$ and
	$\partial_t z \in L^{4/3}(0,T;\,V^{\ast})$.
	Since the exponents $2$ and $4/3$ are not conjugate, the H\"{o}lder
	splitting does not produce the required $L^{1}(0,T)$ bound on
	$\langle \partial_t z,\, z \rangle$, and
	identity~\eqref{eqn:int_by_parts} cannot be justified within the
	current functional framework.
	
	\medskip \noindent 
	Suppose, for the sake of argument, that this obstruction could be
	resolved by means of a refined embedding result.
	Even granting~\eqref{eqn:int_by_parts}, an application of
	Gr\"{o}nwall's inequality would yield
	\begin{align}
		\|z(t,\,\bullet\,)\|^{2}
		\leq
		\|z(0,\,\bullet\,)\|^{2}\,\exp(c\,t),
		\qquad t \in [0,T],
		\label{eqn:gronwall}
	\end{align}
	for some constant $c > 0$.
	To conclude $z \equiv 0$, one requires
	$\|z(0,\,\bullet\,)\| = 0$, that is,
	$z(0,\,\bullet\,) = 0$ in $L^{2}(\Omega)$.
	However, as established in Section~\ref{sec:initial_condition},
	the initial condition is satisfied only in $V^{\ast}$: the comparison
	of the Galerkin equation with the limiting weak formulation yields
	solely
	\begin{align*}
		\langle z(0,\,\bullet\,),\, v \rangle = 0
		\qquad \text{ for all }\, v \in V,
	\end{align*}
	which, by the definition of the $V^{\ast}$ norm, gives
	$z(0,\,\bullet\,) = 0$ in $V^{\ast}$.
	This is strictly weaker than $z(0,\,\bullet\,) = 0$ in $L^2(\Omega)$: since
	$L^2(\Omega) \subsetneq V^{\ast}$, vanishing in $V^{\ast}$ does not imply
	vanishing in $L^2(\Omega)$, and the Gr\"{o}nwall argument~\eqref{eqn:gronwall}
	cannot be closed.
	
	\medskip
	
	Consequently, uniqueness of weak solutions in the class
	of~\eqref{eqn:weak_form_less_regularity} remains open under the
	present regularity assumptions.
	The principal obstructions are the lack of justification for
	identity~\eqref{eqn:int_by_parts} due to the non-conjugate exponents in 
	$z \in L^{2}(0,T;\,V)$ and $\partial_t z \in L^{4/3}(0,T;\,V^{\ast})$,
	and the fact that the initial condition is available only in $V^{\ast}$.
	These difficulties are intrinsic to the low regularity assumption
	$u_0 \in L^{2}(\Omega)$, which cannot be circumvented within the current framework and is the subject of an ongoing research.


	\section{Concluding remarks} \label{sec:concluding remarks}
	This article presents the existence, and uniqueness when it is possible, of weak solutions to the Fisher--Kolmogorov and extended Fisher--Kolmogorov equations under different regularity conditions on the initial data. Table~\ref{tab:regularity} concisely presents this interdependence between regularity of the initial data and that of the weak solution. A key observation from Table~\ref{tab:regularity} is that the temporal regularity of $u$ reduces with the regularity of $u_0$. The solution $u \in L^\infty(0,T;V)$ when $u_0 \in V$, whereas $u \in L^2(0,T;V)$ when $u_0 \in L^2(\Omega).$ The improved temporal regularity when $u_0 \in V$ primarily stems from using the test function $\partial_t u_n$ in~\eqref{eqn:all}. This choice transforms the term $\ellip(u_n,\partial_t u_n)$ into $\frac{\mathrm{d}}{\mathrm{d}t}\|u_n\|_V^2$. An integration of this supplies the right hand side $\|\pi_n u_0\|_V^2$ in~\eqref{eqn:test_function_h1}. This only holds when $u_0 \in V$ and leads to $u \in L^\infty(0,T;V).$ The aforementioned test function is not appropriate in the case of $u_0 \in L^2(\Omega)$. Here, the analysis is enforced to employ the test function $u_n$, which eventually lead to $u \in L^2(0,T;V).$ A possible future direction is the investigation of an interpolated regularity result on $u$ between $L^2(0,T;V)$ and $L^\infty(0,T;V)$ when $u_0 \in H^r_0(\Omega)$ with $0 \le r \le \mathrm{m}$. Another prospective research is the interpolation estimates that can be obtained through relaxing the regularity of $f$ from $L^2(\Omega)$ to $H^{-q}(\Omega)$ with $0 \le q \le \mathrm{m}$. However, these analyses are more involved and beyond the scope of the current work.

	\begin{table}[h!]
		\centering
		\begin{tabular}{|ccc|}
			\hline
			\multicolumn{3}{|c|}{Regularity}                                                  \\ \hline \hline 
			\multicolumn{1}{|c|}{$u_0$} & \multicolumn{1}{c|}{$u$} & $\partial_t u$ \\ \hline
			\multicolumn{1}{|l|}{\begin{minipage}{2cm} \centering
					$V$
				\end{minipage}
			}     & \multicolumn{1}{l|}{
				\begin{minipage}{5cm} \centering
					$L^\infty(0,T;V)$
				\end{minipage}
			}  & \begin{minipage}{3cm} \centering 
				$L^2(0,T;L^2(\Omega))$
			\end{minipage}                             \\ \hline
			\multicolumn{1}{|l|}{
				\begin{minipage}{2cm} \centering
					$L^2(\Omega)$
				\end{minipage}
			}     & \multicolumn{1}{l|}{\begin{minipage}{5cm} \centering
					$L^\infty(0,T;L^2(\Omega))$, $L^2(0,T;V)$, $L^4(0,T;L^4(\Omega))$
			\end{minipage}}  &       \begin{minipage}{3cm} \centering 
				$L^{4/3}(0,T;V^{\ast})$
			\end{minipage}                        \\ \hline
		\end{tabular}
		\caption{Regularity of initial data $u_0$, weak solution $u$, and time derivative $\partial_t u$.}
		\label{tab:regularity}
	\end{table}
	Fisher–Kolmogorov–Petrovsky–Piskunov (Fisher--KPP) equation is a similar model to that of the FK and EFK equations, wherein the nonlinearity is given by $\phi_{\rm KPP}(u) = u(u - 1).$ However, the existence and uniqueness of solutions to the Fisher--KPP equation is not properly investigated in the scientific literature. While~\cite{abdur} addresses this problem using semigroup theory and semidiscretisation methods, a detailed analysis using Faedo--Galerkin method establishing the existence of weak solutions appears to be less explored. 
	
	The nonlinearity in the FK and EFK equations $\phi(u) = u^3 - u$ admits a nonnegative potential $\Phi(u)$ with $\Phi'(u) = \phi(u)$. This and the substitution of $\widehat{w} = \partial_t u_n$ in~\eqref{eqn:all} allow for the transformation of the term $(\phi(u_n),\partial_t u_n)$ to $\dfrac{\mathrm{d}}{\mathrm{d}t} (\Phi(u_n),1)$ in~\eqref{eqn:diff_eqn_on_un}. Since $(\Phi(u_n),1) \ge 0$, the subsequent estimate~\eqref{eqn:test_function_h1} renders the discrete energy estimate 
	\begin{align} 
		\hspace{-0.75cm} \int_{0}^T\| \partial_t u_n \|^2\,\mathrm{d}t + \dfrac{1}{2}\|u_n(T, \bullet) \|_V^2  \lesssim \int_{0}^T( f(t,\bullet), \partial_t u_n)\,\mathrm{d}t +  \dfrac{1}{2}\|\pi_n u_0 \|_V^2  + (\Phi(\pi_n u_0),1)
	\end{align}
	which drives the further analysis. However, the nonlinearity in the Fisher--KPP equation does not admits such a nonnegative potetial function; instead the potential corresponding to $\phi_{\rm KPP}(u)$ is $\Phi_{\rm KPP}(u) = u^3/3 - u^2/2$, which can oscillate in sign. A similar issue is also persistent in the proof of Theorem~\ref{thm:low_regularity}. The discrete energy equation (see~\eqref{eqn:low_regular_main_equation} also) for the Fisher--KPP equation is given by
	\begin{align} 
		\dfrac{1}{2} \dfrac{\mathrm{d}}{\mathrm{d}t} \norm{u_n}^2 + \|u_n\|_V^2 + (u_n^3,1) = (f(t,\bullet),u_n )   + \norm{u_n}^2.
	\end{align}
	Since $(u_n^3,1)$ oscillates in sign, the last displayed equation fails to supply discrete energy estimates. These introduce additional analytical challenges, and a rigorous treatment of this case is the subject of an ongoing work.

	\medskip \medskip \noindent \textbf{Acknowledgement.} GCR acknowledges Prof Neela Nataraj, Department of Mathematics, Indian Institute of Technology Bombay for her support and discussions that facilitated this work.
	
	\medskip \noindent \textbf{Data availability.} Author ensure the availability of codes used in this article. Any data, specifically, Matlab codes–generated during and/or analysed during the current study will be made available on request. The simulations in this article are available in following links in Table~\ref{tab:simulation-videos}.
	\begin{table}[htbp]
		\centering
		\caption{Links to the simulation videos.}
		\label{tab:simulation-videos}
		\renewcommand{\arraystretch}{1.35}
		\begin{tabularx}{\textwidth}{|>{\centering\arraybackslash}p{1.5cm}|
				>{\centering\arraybackslash}p{4.8cm}|
				>{\raggedright\arraybackslash}X|}
			\hline
			\multicolumn{1}{|c|}{\textbf{S. No.}} 
			& \multicolumn{1}{c|}{\textbf{Content}} 
			& \multicolumn{1}{c|}{\textbf{Link}} \\
			\hline
			1 & Front propagation in FK equation
			& \href{https://www.youtube.com/watch?v=8ZJlwJgdWio}
			{https://www.youtube.com/watch?v=8ZJlwJgdWio} \\
			\hline
			2 & Kink formation in the EFK equation
			& \href{https://www.youtube.com/watch?v=I6W11E8BDOk}
			{https://www.youtube.com/watch?v=I6W11E8BDOk} \\
			\hline
			3 & Numerical solution of the FK equation with rough initial data
			& \href{https://www.youtube.com/watch?v=5BmszfDi9YI}
			{https://www.youtube.com/watch?v=5BmszfDi9YI} \\
			\hline
			4 & Numerical solution of the EFK equation with rough initial data
			& \href{https://www.youtube.com/watch?v=NeWuKbE0-R8}
			{https://www.youtube.com/watch?v=NeWuKbE0-R8} \\
			\hline
		\end{tabularx}
	\end{table}

	\printbibliography

	\newpage
	\appendix

	\section{Auxiliary definitions and results}
	This section presents some crucial definitions and results employed in the proofs in the sequel.
	
	\medskip  We now briefly recall the notion of a strongly elliptic operator with the existence of an associated orthonormal eigenbasis in \(L^2(\Omega)\). For a more comprehensive presentation, the reader can refer  to~\cite[Section~A]{folland_pde}.

	\begin{definition}[strongly elliptic operator] The differential operator $\mathcal{L} = \sum_{|\alpha| \le k} a_{\alpha}\,\partial^{\alpha}$, 
		where each coefficient \(a_{\alpha} : \Omega \to \mathbb{R}\) is smooth and \(\alpha\) is a multi--index, is called \emph{strongly elliptic of order $k$} on \(\overline{\Omega}\) if there exists a smooth complex--valued function \(\gamma\) on \(\overline{\Omega}\) with \(|\gamma(x)| = 1\), and a constant \(\mathrm{C} > 0\), such that
		\[
		\mathrm{Re}\,\!\left( \gamma(x)\sum_{|\alpha|=k} a_{\alpha}(x)\,\xi^{\alpha} \right) 
		\ge \mathrm{C}|\xi|^{k}
		\quad \text{for all } \xi \in \mathbb{R}^n \text{ and } x \in \overline{\Omega}.
		\] \label{defn:strongly_elliptic}
	\end{definition} 
	
	\begin{remark}
		Every elliptic operator with real coefficients on $\overline{\Omega}$ is strongly elliptic on $\overline{\Omega}$. In particular, the Laplacian operator $-\Delta$ and the biharomonic operator $\Delta^2$ are strongly elliptic. 
	\end{remark}
	
	\begin{thm}[Existence of orthonormal basis~{\cite[Theorem 7.23]{folland_pde}}] \label{thm:basis}
		Suppose that $\mathcal{L}$ is a strongly elliptic operator of order $2\mathrm{m}$ on $\overline{\Omega}$ that satisfies  $\mathcal{L} = \mathcal{L^*}$ . There is an orthonormal basis ${w_j}$ for $L^2(\Omega)$ consisting of eigenfunctions for $\mathcal{L}$ which are $C^{\infty}$ on $\overline{\Omega}$ and satisfy the Dirichlet condition $\partial_n^i w_j $ on $\partial\Omega$ for $0\leq i \le \mathrm{m}$. 
	\end{thm}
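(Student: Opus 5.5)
The plan is the classical route through a compact self-adjoint solution operator. Work in $H^{\mathrm{m}}_0(\Omega)$, which is where the Dirichlet conditions $\partial_n^i w = 0$, $0 \le i \le \mathrm{m}-1$, are encoded. Associate with $\mathcal{L}$ its sesquilinear Dirichlet form $B(u,v)$, obtained by integrating by parts $\mathrm{m}$ times so that only derivatives of order at most $\mathrm{m}$ fall on each argument. The strong ellipticity in Definition~\ref{defn:strongly_elliptic} gives G\aa rding's inequality
\[
\mathrm{Re}\, B(u,u) \ge c\|u\|_{H^{\mathrm{m}}(\Omega)}^2 - \lambda_0 \|u\|^2 \qquad \text{for all } u \in H^{\mathrm{m}}_0(\Omega).
\]
One first absorbs the unimodular factor $\gamma$. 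Since $\mathcal{L}=\mathcal{L}^*$ has real principal symbol behaviour, one can take $\gamma \equiv \pm 1$ after a sign change. This is immediate for $(-1)^{\mathrm{m}}\Delta^{\mathrm{m}}$, for which the inequality holds with $\lambda_0=0$ by the Poincar\'e inequality. Hence $B_\lambda(u,v) := B(u,v) + \lambda(u,v)$ is coercive on $H^{\mathrm{m}}_0(\Omega)$ for $\lambda > \lambda_0$.

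By Lax--Milgram, for every $g \in L^2(\Omega)$ there is a unique $u = Tg \in H^{\mathrm{m}}_0(\Omega)$ with $B_\lambda(u,v) = (g,v)$ for all $v$. Moreover $\|Tg\|_{H^{\mathrm{m}}} \lesssim \|g\|$. Composing with the Rellich--Kondrachov compact embedding $H^{\mathrm{m}}_0(\Omega) \hookrightarrow L^2(\Omega)$ shows that $T : L^2(\Omega) \to L^2(\Omega)$ is compact. The formal self-adjointness $\mathcal{L} = \mathcal{L}^*$ gives $B(u,v) = \overline{B(v,u)}$, so $T$ is self-adjoint. It is also injective and positive. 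The spectral theorem for compact self-adjoint operators then yields an orthonormal basis $(w_j)$ of $L^2(\Omega)$ made of eigenvectors $Tw_j = \mu_j w_j$, with $\mu_j > 0$ and $\mu_j \to 0$. Injectivity of $T$ ensures there is no kernel to add. Each $w_j = \mu_j^{-1} T w_j$ lies in $H^{\mathrm{m}}_0(\Omega)$ and is a weak solution of $\mathcal{L} w_j = (\mu_j^{-1} - \lambda) w_j$. This already gives the Dirichlet conditions in the trace sense. Orthogonality in $V$ follows from $\ellip(w_i,w_j) = (\mu_j^{-1}-\lambda)(w_i,w_j)$.

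The remaining step, smoothness up to $\overline{\Omega}$, is the main obstacle. I would bootstrap with the higher-order elliptic regularity estimate for the Dirichlet problem,
\[
\|u\|_{H^{k+2\mathrm{m}}} \lesssim \|\mathcal{L}u\|_{H^k} + \|u\|,
\]
applied to $\mathcal{L}w_j = \nu_j w_j$. This gives $w_j \in H^k(\Omega)$ for every $k$, hence $C^\infty(\overline{\Omega})$ by Sobolev embedding, and the classical boundary conditions then hold pointwise. The difficulty is that this global regularity requires a $C^\infty$ boundary, which is the setting of~\cite{folland_pde}. On the polygonal domains used in this paper, corner singularities restrict global regularity to $H^{\mathrm{m}+\sigma}$, as in Theorem~\ref{elliptic_regularity}. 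Interior regularity via difference quotients or Friedrichs mollification still gives $w_j \in C^\infty(\Omega)$. For the Faedo--Galerkin argument, only $w_j \in V$ together with the orthogonality properties is actually needed. So I would state the smoothness conclusion for smooth $\partial\Omega$, and for polygons settle for interior smoothness plus membership in $H^{\mathrm{m}}_0(\Omega)$.
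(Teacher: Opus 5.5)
Your proposal is correct and follows the standard argument behind the cited result, which the paper quotes from Folland without proof: G\aa rding's inequality, a compact, injective, self-adjoint solution operator for $\mathcal{L}+\lambda$ on $L^2(\Omega)$, the spectral theorem for compact self-adjoint operators, and a bootstrap with global Dirichlet regularity to reach $C^\infty(\overline{\Omega})$. Your caveats are also right: the last step needs Folland's standing assumption of a $C^\infty$ boundary, and the boundary conditions should read $\partial_n^i w_j = 0$ for $0 \le i \le \mathrm{m}-1$; on the paper's polygonal domains one therefore only obtains $w_j \in H^{\mathrm{m}}_0(\Omega) \cap C^\infty(\Omega)$ together with the weak eigenvalue relation, which is all the Faedo--Galerkin argument actually uses.
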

	
	The existence and uniqueness results for a system of ordinary differential equations is stated. For a comprehensive exposition, refer to~\cite{coddington} or \cite[Section 1.6]{Roubicek_nonlinearpde}.
	\begin{thm}[Picard--Lindelöf] \label{thm:picard_lindelof}
		Let \(\Omega \subset \mathbb{R} \times \mathbb{R}^n\) be an open set, and let 
		\(f : \Omega \to \mathbb{R}^n\) be continuous. Assume \(f\) is locally Lipschitz in the second variable. Then for any \((t_0, x_0) \in D\), there exists an interval 
		\(I = (t_0 - \delta, t_0 + \delta)\) and a unique function 
		\(x \in C^1(I;\mathbb{R}^n)\) satisfying
		\begin{align*}
			\dfrac{\mathrm{d}x}{\mathrm{d}t} = f(t,x(t)) \quad \text{ for almost every } t \in I \quad \text{and} \quad x(t_0) = x_0.
		\end{align*}
	\end{thm}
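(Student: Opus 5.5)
The statement is the Picard--Lindel\"of theorem, which I would prove by the standard contraction mapping argument applied to the equivalent integral equation. First fix $(t_0,x_0)$ in the domain $D$ (the open set called $\Omega$ in the statement) and choose $a,b>0$ so that the closed cylinder $Q=[t_0-a,t_0+a]\times \overline{B(x_0,b)}$ lies in $D$. Continuity of $f$ on the compact set $Q$ gives $M=\sup_Q|f|<\infty$. Local Lipschitz continuity in the second variable, together with a finite subcover of $Q$, gives a single constant $L$ such that $|f(t,x)-f(t,y)|\le L|x-y|$ for all $(t,x),(t,y)\in Q$. Obtaining this uniform $L$ on $Q$ is the one slightly delicate point, since the hypothesis is only local; if needed I would shrink $b$ so that $Q$ sits inside a single Lipschitz neighbourhood.

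Next, set $\delta<\min(a,\,b/M,\,1/L)$ (with $1/L=\infty$ if $L=0$, and $b/M=\infty$ if $M=0$) and $I=(t_0-\delta,t_0+\delta)$. Let $X$ be the set of continuous maps $x:\overline I\to\overline{B(x_0,b)}$, equipped with the sup norm; as a closed subset of a Banach space it is a complete metric space. Define
\begin{align*}
(\mathcal T x)(t)=x_0+\int_{t_0}^t f(s,x(s))\,\mathrm{d}s .
\end{align*}
The bound $|\mathcal Tx(t)-x_0|\le M\delta\le b$ shows $\mathcal T(X)\subset X$, and the estimate
\begin{align*}
\|\mathcal Tx-\mathcal Ty\|_\infty\le L\delta\|x-y\|_\infty
\end{align*}
with $L\delta<1$ shows that $\mathcal T$ is a contraction. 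The Banach fixed point theorem then yields a unique fixed point $x\in X$.

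Since $s\mapsto f(s,x(s))$ is continuous, the fundamental theorem of calculus shows that the fixed point is $C^1$, solves $x'=f(t,x)$ for every $t\in I$ (in particular for almost every $t$), and satisfies $x(t_0)=x_0$. Conversely, any $C^1$ solution on $I$ satisfies the integral equation, so the two formulations are equivalent.

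It remains to get uniqueness among all $C^1$ solutions on $I$, not only those staying in $\overline{B(x_0,b)}$. A continuity argument handles this: as long as a solution $y$ stays in the ball, $|y(t)-x_0|\le M|t-t_0|<b$, so $y$ cannot leave the ball on $I$ and hence belongs to $X$. Alternatively, Gr\"onwall's inequality applied to $|x-y|$ gives uniqueness directly.
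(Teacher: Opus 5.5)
Your proof is correct. The paper gives no proof of this auxiliary result; it only cites standard ODE texts. There the classical argument (e.g.\ in Coddington) uses Picard's successive approximations $x_{k+1}(t)=x_0+\int_{t_0}^t f(s,x_k(s))\,\mathrm{d}s$ on the whole interval $|t-t_0|\le\min(a,b/M)$. Uniform convergence comes from the factorial bound $|x_{k+1}(t)-x_k(t)|\le ML^k|t-t_0|^{k+1}/(k+1)!$, and uniqueness from Gr\"onwall's inequality. Your Banach fixed-point version is the same integral-equation idea in more compact form. Its only cost is the extra restriction $\delta<1/L$. That is harmless, since the statement asks only for some $\delta>0$, and it disappears if you use the weighted norm $\sup_t e^{-2L|t-t_0|}|x(t)|$: then $\mathcal T$ is a contraction with constant $1/2$ on $|t-t_0|\le\min(a,b/M)$.

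Two details deserve a line in a final write-up. First, your reading of ``locally Lipschitz in the second variable'' as a Lipschitz bound uniform on neighbourhoods in $\Omega$ is the necessary one. With merely $t$-wise Lipschitz continuity the claim fails: $f(t,x)=2t\min\{\max\{x/t^2,0\},1\}$ for $t>0$, $f=0$ for $t\le0$, is continuous and Lipschitz in $x$ for each fixed $t$, yet both $x\equiv0$ and $x(t)=(\max\{t,0\})^2$ solve the problem through $(0,0)$. Of your two ways to get a uniform $L$, shrinking $Q$ into one such neighbourhood is cleaner; the finite-subcover route needs an extra Lebesgue-number step. Second, a competing solution $y$ lives on the open interval $I$. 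The bound $|y'|\le M$ from your confinement argument lets you extend it continuously to $\overline I$, so that it lies in $X$. Your Gr\"onwall alternative also needs this confinement step first, since $L$ controls $f$ only on $Q$.
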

	
	Crucial results from functional analysis and measure theory are stated next.
	
	\begin{thm}[Dominated convergence theorem~{\cite[Theorem 4.2]{brezis}}] \label{thm:DCT}
		Let $(f_{n})_{n \ge 1}$ be a sequence of functions in $L^1(\Omega)$ such that $f_{n}(x) \rightarrow f(x)$ for almost every $x \in \Omega$. If there exists a function $g \in L^1$ such that for all $n$, $|f_n(x)| \le |g(x)|$ almost everywhere on $\Omega$, then $f \in L^1(\Omega)$ and $\|f - f_n\|_{L^1(\Omega)} \rightarrow 0$.
	\end{thm}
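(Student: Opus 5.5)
The plan is the classical route through Fatou's lemma (Theorem~\ref{thm:fatou}), applied to a suitably chosen nonnegative sequence built from the dominating function. I first settle that the limit is integrable, and then obtain the $L^1(\Omega)$ convergence from a single application of Fatou's lemma.

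\emph{Integrability of $f$.} Since each $f_n$ is measurable and $f_n \rightarrow f$ almost everywhere on $\Omega$, the limit $f$ agrees almost everywhere with $\limsup_{n\rightarrow\infty} f_n$. Hence $f$ is measurable, after modification on a null set. Passing to the limit in $|f_n(x)| \le |g(x)|$ gives $|f(x)| \le |g(x)|$ for almost every $x \in \Omega$. Because $g \in L^1(\Omega)$, it follows that $\int_\Omega |f|\,\mathrm{d}x \le \|g\|_{L^1(\Omega)} < \infty$, so $f \in L^1(\Omega)$.

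\emph{Convergence in $L^1(\Omega)$.} Define $h_n := 2|g| - |f_n - f|$. The triangle inequality and the two pointwise bounds give $|f_n - f| \le |f_n| + |f| \le 2|g|$ almost everywhere, so $h_n \ge 0$ almost everywhere and each $h_n$ is integrable. Since $|f_n - f| \rightarrow 0$ almost everywhere, we have $\liminf_{n} h_n = 2|g|$ almost everywhere. Fatou's lemma then yields
\begin{align*}
\int_\Omega 2|g|\,\mathrm{d}x \le \liminf_{n\rightarrow\infty}\int_\Omega \big(2|g| - |f_n - f|\big)\,\mathrm{d}x = \int_\Omega 2|g|\,\mathrm{d}x - \limsup_{n\rightarrow\infty}\|f_n - f\|_{L^1(\Omega)}.
\end{align*}

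\emph{Conclusion.} The quantity $\int_\Omega 2|g|\,\mathrm{d}x$ is finite, so it may be subtracted from both sides. This gives $\limsup_{n} \|f_n - f\|_{L^1(\Omega)} \le 0$, and therefore $\|f - f_n\|_{L^1(\Omega)} \rightarrow 0$.

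The only delicate point is this final subtraction, which is legitimate precisely because $g \in L^1(\Omega)$. This is why the domination hypothesis is used twice: once to build a nonnegative sequence to which Fatou's lemma applies, and once to cancel the finite integral of $2|g|$. The remaining details are measure-theoretic bookkeeping, namely collecting the countably many exceptional null sets into a single null set on which all the pointwise statements may fail.
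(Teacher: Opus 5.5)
Your argument is correct. It is the standard proof of dominated convergence: Fatou's lemma applied to the nonnegative sequence $h_n = 2|g| - |f_n - f|$, followed by cancelling the finite quantity $2\|g\|_{L^1(\Omega)}$.

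The paper gives no proof to compare against. It quotes the result from Brezis as an auxiliary fact, so your write-up supplies what the citation stands in for. Deriving it from the paper's Fatou lemma (Theorem~\ref{thm:fatou}) is also non-circular, since that lemma does not rely on dominated convergence.

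One bookkeeping point: the version of Fatou's lemma recorded in the paper has the extra hypothesis $\sup_n \int_\Omega h_n\,\mathrm{d}x < \infty$. This holds because $0 \le h_n \le 2|g|$ almost everywhere gives $\int_\Omega h_n\,\mathrm{d}x \le 2\|g\|_{L^1(\Omega)}$. State it explicitly so that your invocation matches the lemma as stated.
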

	
	\begin{thm}[Fatou's lemma~{\cite[Lemma 4.1]{brezis}}] \label{thm:fatou}
		Let $(f_{n})_{n \ge 1}$ be a sequence of functions in $L^1(\Omega)$ such that $f_{n} \ge 0$ almost everywhere and $\sup_{n} \int_{\Omega}f_n(x)\,\mathrm{d}x < \infty$ for all $n \ge 1$. Then, for almost all $x \in \Omega$, $f(x) = \liminf_{n \rightarrow \infty}f(x)$ satisfy $f \in L^1(\Omega)$ and 
		\begin{align*}
			\int_{\Omega} f(x)\,\mathrm{d}x \le \liminf_{n \rightarrow \infty} f_n(x)\,\mathrm{d}x
		\end{align*}
	\end{thm}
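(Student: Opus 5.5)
The statement is to be read in its intended form: with $f(x) := \liminf_{n\to\infty} f_n(x)$, we need $f \in L^1(\Omega)$ and $\int_\Omega f\,\mathrm{d}x \le \liminf_{n\to\infty}\int_\Omega f_n\,\mathrm{d}x$. The plan is to reduce to the monotone convergence theorem through the lower envelopes of the tail of the sequence.

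\emph{Step 1 (lower envelopes).} For each $k \ge 1$ define $g_k(x) := \inf_{n \ge k} f_n(x)$. As a countable infimum of measurable functions, $g_k$ is measurable. Since $f_n \ge 0$ almost everywhere, $g_k \ge 0$ almost everywhere, after discarding the null set (a countable union of null sets) where some $f_n$ is negative. The sequence $(g_k)_{k\ge1}$ is nondecreasing in $k$, and by definition of the limit inferior $g_k(x) \uparrow f(x)$ for almost every $x \in \Omega$.

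\emph{Step 2 (comparison).} For every $n \ge k$ we have $g_k \le f_n$ almost everywhere, so $\int_\Omega g_k\,\mathrm{d}x \le \int_\Omega f_n\,\mathrm{d}x$. Taking the infimum over $n \ge k$ gives
\begin{align*}
\int_\Omega g_k\,\mathrm{d}x \le \inf_{n \ge k}\int_\Omega f_n\,\mathrm{d}x \le \sup_{n\ge1}\int_\Omega f_n\,\mathrm{d}x < \infty .
\end{align*}

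\emph{Step 3 (passage to the limit).} By the monotone convergence theorem applied to the nonnegative nondecreasing sequence $(g_k)$, we get $\int_\Omega g_k\,\mathrm{d}x \to \int_\Omega f\,\mathrm{d}x$ as $k\to\infty$. Letting $k \to \infty$ in Step 2, the right-hand side $\inf_{n\ge k}\int_\Omega f_n\,\mathrm{d}x$ increases to $\liminf_{n}\int_\Omega f_n\,\mathrm{d}x$. This yields
\begin{align*}
\int_\Omega f\,\mathrm{d}x \le \liminf_{n\to\infty}\int_\Omega f_n\,\mathrm{d}x \le \sup_{n}\int_\Omega f_n\,\mathrm{d}x < \infty .
\end{align*}
Since $f \ge 0$, this shows $f \in L^1(\Omega)$, and both conclusions follow.

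The only nontrivial ingredient, and the main obstacle, is the monotone convergence theorem itself, which is not stated among the auxiliary results. I would cite it from~\cite{brezis}. If a self-contained argument is wanted, I would prove it by two observations. First, $\lim_k \int g_k \le \int f$ by monotonicity of the integral. Second, for the reverse inequality, fix a nonnegative simple function $s \le f$ and $c\in(0,1)$; the sets $E_k = \{g_k \ge c s\}$ increase to $\Omega$ up to a null set. Countable additivity of $A \mapsto \int_A s$ then gives $\lim_k \int g_k \ge c\int s$, and one concludes by letting $c \uparrow 1$ and taking the supremum over $s$. The measurability and null-set bookkeeping in Step 1 is routine.
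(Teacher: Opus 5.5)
Your proof is correct, and your reading of the garbled statement is the intended one: $f := \liminf_{n} f_n$ pointwise, with $\int_\Omega$ restored on the right-hand side. The paper gives no proof of its own here; it only quotes the result from Brezis. Your argument, via the lower envelopes $g_k = \inf_{n\ge k} f_n$ and the monotone convergence theorem, is the standard derivation.

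One detail could be made explicit. Since $0 \le g_k \le f_k$ a.e., each $g_k$ lies in $L^1(\Omega)$. Your Step 2 bound is then exactly the hypothesis of the Beppo Levi form of monotone convergence (increasing $L^1$ functions with bounded integrals). That form also gives directly that $f$ is finite a.e., which your closing line leaves implicit.
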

	
	\medskip \noindent Suppose that $E$ and $F$ are two Banach spaces and by $\mathcal{L}(E,F)$ denote the space of continuous linear operators from $E$ into $F$ equipped with the norm
	\begin{align*}
		\|T\|_{\mathcal{L}(E,F)} = \sup_{x \in E, \|x\| \le 1 } \|Tx\|
	\end{align*}
	\begin{thm}[Banach--Steinhaus, uniform boundedness principle~{~\cite[Theorem 2.2]{brezis}}] \label{thm:uniform_boundedness}
		Let $E$ and $F$ are Banach spaces. Suppose that $(T_{\alpha})_{\alpha \in \mathfrak{A}}$ be a family of continuous linear operators from $E$ to $F$ with some arbitrary indexing set $\mathfrak{A}$ (not necessarily countable). Assume that $\sup_{\alpha} \|T_{\alpha}x\| < \infty$ for all $x \in E$. Then it holds, $\sup_{\alpha} \|T\|_{\mathcal{L}(E,F) } < \infty.$
	\end{thm}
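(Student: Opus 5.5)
The plan is the classical Baire category argument, which needs completeness only of $E$. For each $k \in \mathbb{N}$ I would define
\begin{align*}
X_k := \Big\{ x \in E \;:\; \sup_{\alpha \in \mathfrak{A}} \|T_\alpha x\| \le k \Big\} = \bigcap_{\alpha \in \mathfrak{A}} \big\{ x \in E : \|T_\alpha x\| \le k \big\}.
\end{align*}
Each set $\{x : \|T_\alpha x\| \le k\}$ is closed, because $x \mapsto \|T_\alpha x\|$ is continuous. Hence $X_k$ is closed, as an arbitrary intersection of closed sets; this is why an uncountable index set $\mathfrak{A}$ causes no difficulty. By the hypothesis $\sup_\alpha \|T_\alpha x\| < \infty$ for every $x \in E$, we have $E = \bigcup_{k \ge 1} X_k$.

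Next I would invoke the Baire category theorem on the complete metric space $E$. A countable union of closed sets with empty interior has empty interior, so some $X_{k_0}$ must have nonempty interior. Thus there exist $x_0 \in E$ and $r > 0$ with $\overline{B(x_0,r)} \subset X_{k_0}$.

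Finally I would transfer this local bound to a global operator-norm bound by linearity. For any $z \in E$ with $\|z\| \le 1$, the point $x_0 + rz$ lies in $X_{k_0}$, and so does $x_0$. Hence, for every $\alpha$,
\begin{align*}
r\|T_\alpha z\| = \|T_\alpha(x_0 + rz) - T_\alpha x_0\| \le \|T_\alpha(x_0+rz)\| + \|T_\alpha x_0\| \le 2k_0 .
\end{align*}
Taking the supremum over $\|z\| \le 1$ gives $\|T_\alpha\|_{\mathcal{L}(E,F)} \le 2k_0/r$ uniformly in $\alpha$, which is the claim.

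The only substantive step is the appeal to Baire's theorem, which is where completeness of $E$ is essential; completeness of $F$ is never used. Everything else is the closedness check and the elementary rescaling above. If Baire's theorem were not available as a black box, I would prove it directly in this setting. Assuming every $X_k$ has empty interior, I would build nested closed balls $\overline{B(x_k,r_k)}$ with $r_k \to 0$ that avoid $X_k$. Their centres form a Cauchy sequence, whose limit in $E$ would lie in no $X_k$, contradicting $E = \bigcup_k X_k$.
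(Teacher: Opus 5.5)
Your proposal is correct and follows the standard Baire category proof in the cited source (Brezis, Theorem~2.2), which the paper quotes without reproving. The steps match that proof: the closed sets $X_k$ cover $E$, some $X_{k_0}$ contains a ball, and the rescaling gives $r\|T_\alpha z\| \le 2k_0$ and hence $\|T_\alpha\|_{\mathcal{L}(E,F)} \le 2k_0/r$ uniformly in $\alpha$; you are also right that only completeness of $E$ is needed.
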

	
	
	\begin{thm}[Banach--Alaoglu~{\cite[Theorem 3.18]{brezis}}] \label{thm:alaoglu}
		If $E$ is a reflexive Banach space and $(x_n)_{n \ge 1}$ be a bounded sequence in $E$, then there exists a subsequence $(x_{n_k})_{k \ge 1}$ that is weakly convergent. 
	\end{thm}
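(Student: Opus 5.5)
The plan is the classical reduction to a separable reflexive space, followed by a diagonal extraction and an identification of the limit through reflexivity. Since Theorem~\ref{thm:alaoglu} is quoted from~\cite{brezis}, we only sketch the argument and take the standard functional-analytic facts (Hahn--Banach, and the facts listed below) as known.

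\emph{Step 1 (reduction).} Let $(x_n)_{n \ge 1}$ be bounded in $E$ with $\sup_n \|x_n\| \le M_0$. Set $M := \overline{\mathrm{span}}\{x_n : n \ge 1\}$. Finite rational linear combinations of the $x_n$ are dense in $M$, so $M$ is a separable closed subspace of $E$. We will use two facts.
\begin{itemize}
\item[(i)] A closed subspace of a reflexive space is reflexive. One proof identifies $M$ with a weakly closed subset of $E$ and uses Hahn--Banach to show that the canonical injection $J_M : M \to M^{\ast\ast}$ is onto.
\item[(ii)] If $M^{\ast}$ is separable then $M$ is separable; applied to $M^{\ast}$, this shows that $M^{\ast\ast} = J_M(M)$ separable forces $M^{\ast}$ separable.
\end{itemize}
I expect this step to be the main obstacle, specifically fact (ii). My first attempt at (ii) would be: pick a dense sequence $(\xi_k)$ in the unit sphere of $M^{\ast\ast}$, choose $f_k \in M^{\ast}$ with $\|f_k\| \le 1$ and $\xi_k(f_k) \ge 1/2$, and show via Hahn--Banach that the closed span of the $f_k$ is all of $M^{\ast}$. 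The argument goes by contradiction: a nonzero $\xi \in M^{\ast\ast}$ vanishing on every $f_k$ would satisfy $\xi_k(f_k) \le \|\xi_k - \xi/\|\xi\|\|$, which cannot hold for $\xi_k$ chosen close to $\xi/\|\xi\|$.

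\emph{Step 2 (diagonal extraction).} Let $(f_k)_{k \ge 1}$ be dense in $M^{\ast}$. For each $k$, the scalar sequence $(f_k(x_n))_n$ is bounded by $\|f_k\| M_0$. A Cantor diagonal argument then gives a subsequence $(x_{n_j})_j$ such that $f_k(x_{n_j})$ converges for every $k$. For an arbitrary $f \in M^{\ast}$, the estimate
\begin{align*}
|f(x_{n_i}) - f(x_{n_j})| \le 2M_0\|f - f_k\| + |f_k(x_{n_i}) - f_k(x_{n_j})|,
\end{align*}
together with density of $(f_k)$, shows that $(f(x_{n_j}))_j$ is Cauchy. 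Hence it converges for every $f \in M^{\ast}$.

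\emph{Step 3 (identification of the limit).} Define $\xi(f) := \lim_j f(x_{n_j})$ for $f \in M^{\ast}$. Then $\xi$ is linear and satisfies $|\xi(f)| \le M_0\|f\|$, so $\xi \in M^{\ast\ast}$. Since $M$ is reflexive by Step 1, there is $x \in M$ with $\xi = J_M x$, that is, $f(x_{n_j}) \to f(x)$ for all $f \in M^{\ast}$. Finally, for any $g \in E^{\ast}$ the restriction $g|_M$ lies in $M^{\ast}$, so $g(x_{n_j}) \to g(x)$. This is exactly $x_{n_j} \rightharpoonup x$ weakly in $E$, which proves Theorem~\ref{thm:alaoglu}.
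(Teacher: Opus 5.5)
Your argument is correct. The paper gives no proof of Theorem~\ref{thm:alaoglu} and only cites~\cite[Theorem 3.18]{brezis}, so the relevant comparison is with Brezis' proof.

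Your skeleton coincides with Brezis':
\begin{itemize}
\item pass to the separable closed span $M$ of the $x_n$;
\item use that a closed subspace of a reflexive space is reflexive;
\item deduce that $M^{\ast}$ is separable, via the result ``$X^{\ast}$ separable $\Rightarrow$ $X$ separable''. Your sketch of fact (ii) is exactly the standard proof of this.
\item transfer weak convergence from $M$ to $E$ by restricting functionals in $E^{\ast}$ to $M$.
\end{itemize}

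You differ only in the extraction step. Brezis uses Kakutani's theorem to get that $B_M$ is $\sigma(M,M^{\ast})$-compact. He combines this with metrizability of $B_M$ in the weak topology when $M^{\ast}$ is separable, and extracts a convergent subsequence from a compact metric space. You instead run a Cantor diagonal argument over a dense sequence in $M^{\ast}$ to obtain a pointwise limit $\xi \in M^{\ast\ast}$. You then use $M^{\ast\ast} = J_M(M)$ to realise $\xi$ as an element $x \in M$.

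Your route is more elementary and self-contained. It needs only Hahn--Banach, completeness of the scalars and reflexivity. If you prove fact (i) directly, you also avoid Kakutani's theorem entirely: define $\eta(g) := \xi(g|_M)$ on $E^{\ast}$, write $\eta = J_E x$ by reflexivity of $E$, and use Hahn--Banach separation to see $x \in M$ and $J_M x = \xi$.

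Brezis' route places the result inside the general topological picture: bounded sets of a reflexive space are weakly compact, and weakly metrizable in the separable-dual case. That machinery is reused in his book, e.g.\ for sequential weak-$\ast$ compactness in duals of separable spaces.
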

	
	\begin{thm}[Vitali's convergence theorem{~\cite[Chapter 4, Exercises 4.14]{brezis}}] \label{thm:vitali}
		Let $(f_n)_{n \ge 1}$ be a sequence in $L^p(\Omega)$ with $1 \le p < \infty$. Under the following hypothesis:
		\begin{itemize}
			\item[(a)] for all $\epsilon > 0$, there exists $\delta > 0$ such that $\int_{A} |f_n|^p\,\mathrm{d}x < \epsilon$ for all $n$ and for all $A \subset \Omega$ measurable with $|A| < \delta$.
			\item[(b)] $f_n \rightarrow f$ almost everywhere.
		\end{itemize}
		then, it holds $f \in L^p(\Omega)$ and $f_n \rightarrow f$ in $L^p(\Omega).$
	\end{thm}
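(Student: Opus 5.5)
The plan is to use Egorov's theorem to reduce to uniform convergence away from a small set, and to control that small set with hypothesis (a). Since $\Omega$ is bounded, $|\Omega| < \infty$ throughout, and this is what makes both steps work.

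\emph{Step 1 (uniform $L^p$ bound).} Fix $\epsilon = 1$ in (a) and let $\delta > 0$ be the corresponding number.
\begin{itemize}
\item Since $\Omega$ is bounded, partition it into finitely many measurable pieces $A_1,\dots,A_N$, each with $|A_k| < \delta$. For instance, intersect $\Omega$ with a fine grid of cubes; Lebesgue measure has no atoms, so this is always possible.
\item Hypothesis (a) then gives $\int_\Omega |f_n|^p\,\mathrm{d}x = \sum_{k=1}^N \int_{A_k}|f_n|^p\,\mathrm{d}x \le N$ for every $n$.
\item By (b), $|f_n|^p \to |f|^p$ almost everywhere. Fatou's lemma (Theorem~\ref{thm:fatou}) yields $\int_\Omega |f|^p\,\mathrm{d}x \le N$, so $f \in L^p(\Omega)$.
\item The same Fatou argument applied on a set $A$ gives $\int_A |f|^p\,\mathrm{d}x \le \epsilon$ whenever $|A| < \delta(\epsilon)$. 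So the limit $f$ inherits the smallness property (a).
\end{itemize}

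\emph{Step 2 (Egorov).} Fix $\epsilon > 0$ and the associated $\delta$ from (a). Because $|\Omega| < \infty$ and $f_n \to f$ almost everywhere, Egorov's theorem gives a measurable set $B \subset \Omega$ with $|B| < \delta$ such that $f_n \to f$ uniformly on $\Omega \setminus B$.

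\emph{Step 3 (splitting the error).} Using $|a-b|^p \le 2^{p-1}(|a|^p + |b|^p)$ on $B$, we get
\begin{align*}
\|f_n - f\|_{L^p(\Omega)}^p \le |\Omega| \sup_{\Omega\setminus B}|f_n - f|^p + 2^{p-1}\Big(\int_B |f_n|^p\,\mathrm{d}x + \int_B |f|^p\,\mathrm{d}x\Big).
\end{align*}
\begin{itemize}
\item The bracketed term is at most $2^p\epsilon$, by hypothesis (a) and by Step 1.
\item The first term tends to zero as $n \to \infty$, by the uniform convergence from Step 2.
\end{itemize}
Hence $\limsup_{n} \|f_n - f\|_{L^p(\Omega)}^p \le 2^p \epsilon$. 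Since $\epsilon$ is arbitrary, $f_n \to f$ in $L^p(\Omega)$.

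\emph{Main obstacle.} The delicate point is Step 1. Hypothesis (a) does not by itself bound $\sup_n \|f_n\|_{L^p}$. That bound depends on $\Omega$ having finite measure and on Lebesgue measure being non-atomic, so that $\Omega$ splits into finitely many small pieces. Egorov's theorem likewise requires finite measure. Everything else is routine.
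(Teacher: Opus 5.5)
Your proof is correct. The paper gives no argument of its own for this statement; it quotes it as an auxiliary result from Brezis, whose intended solution is the Egorov-plus-Fatou route you take. So you have supplied the standard proof, and there is no competing route in the paper to compare against.

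Two remarks on what your argument actually depends on.

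First, you were right to read in $|\Omega|<\infty$. The paper's $\Omega$ is bounded, and so is the set $E=\Omega_T$ where Vitali is applied in Lemma~\ref{lemma:L1convergence}. Without finite measure the statement as written is false: on $\mathbb{R}$, the indicator functions of $[n,n+1]$ satisfy (a) with $\delta=\epsilon$ and converge to $0$ pointwise, yet each has $L^p$ norm $1$.

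Second, the global bound in Step~1, and with it the non-atomic partition you single out as the delicate point, is not needed. Fatou's lemma on the Egorov set $B$ alone gives $\int_B|f|^p\,\mathrm{d}x\le\epsilon$. Your Step~3 estimate then shows $\int_\Omega|f_n-f|^p\,\mathrm{d}x<\infty$ as soon as $\sup_{\Omega\setminus B}|f_n-f|$ is finite, which holds for large $n$. Hence $f=f_n-(f_n-f)\in L^p(\Omega)$. The only structural hypothesis really used is finiteness of the measure, in Egorov's theorem and in the bound $|\Omega|\sup_{\Omega\setminus B}|f_n-f|^p$. The argument therefore works on any finite measure space, atoms included.
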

	\begin{thm}[Compactness~{\cite[Lemma 7.7]{Roubicek_nonlinearpde}}]\label{compactness theorem}
		Suppose that $X \xhookrightarrow{}Y\xhookrightarrow{}Z$ are Banach spaces.,where $X$ and $Z$ are reflexive and $X$ is compactly embedded in $Y$. If the functions $u_n: (0,T)\rightarrow X$ are such that $(u_n)_{n \ge 1}$ is uniformly bounded in $L^2(0,T;X)$ and $(\partial_t u_{n})_{n \ge 1}$ is uniformly bounded in $L^p(0,T;Z)$, then there is a subsequence that converges strongly in $L^2(0,T;Y)$. 
	\end{thm}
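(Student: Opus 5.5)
The statement is the Aubin--Lions type compactness result. I would prove it through Ehrling's lemma combined with a uniform time-translation estimate; this is the standard Simon-type argument.

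\emph{Step 1: Ehrling's interpolation inequality.} Since $X \hookrightarrow Y$ compactly and $Y \hookrightarrow Z$ continuously, for every $\varepsilon>0$ there is $C_\varepsilon$ with $\|w\|_Y \le \varepsilon\|w\|_X + C_\varepsilon\|w\|_Z$ for all $w\in X$. I would prove this by contradiction. If it failed, there would be $w_k$ with $\|w_k\|_X=1$ and $\|w_k\|_Y > \varepsilon + k\|w_k\|_Z$. Compactness gives a subsequence with $w_k\to w$ in $Y$. Because $\|w_k\|_Y$ is bounded, $\|w_k\|_Z\to0$, so $w=0$ in $Z$, and hence in $Y$ by injectivity of the embedding. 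This contradicts $\|w\|_Y\ge\varepsilon$.

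\emph{Step 2: Compactness in $L^2(0,T;Z)$.} I would view $(u_n)$ as bounded in $L^2(0,T;X)$ with $\partial_t u_n$ bounded in $L^p(0,T;Z)$. Then $u_n(t+h)-u_n(t)=\int_t^{t+h}\partial_t u_n\,\mathrm{d}s$. H\"older's inequality gives $\|u_n(\cdot+h)-u_n\|_{L^2(0,T-h;Z)}\lesssim h^{\alpha}$ for some $\alpha>0$, uniformly in $n$. For $p<2$, for instance $p=4/3$, I would first bound the $L^1$ translate by $h\|\partial_tu_n\|_{L^1(Z)}$. I would then use the embedding $X\hookrightarrow Z$ and the $L^2(0,T;X)$ bound, interpolating between $L^1$ and $L^2$ to get an $L^2$ translate estimate of order $h^{1/4}$ or similar. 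The averages $\int_{t_1}^{t_2}u_n\,\mathrm{d}t$ are bounded in $X$, hence relatively compact in $Z$. The Fr\'echet--Kolmogorov/Simon criterion in Bochner spaces then yields a subsequence that is Cauchy in $L^2(0,T;Z)$.

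Alternatively, to avoid Simon's theorem, I would use Steklov averages $u_n^h(t)=h^{-1}\int_t^{t+h}u_n$. These are bounded in $C([0,T-h];X)$ and equicontinuous in $Z$, so Arzel\`a--Ascoli gives compactness in $C([0,T-h];Y)$. A diagonal argument over $h=1/k$ then transfers this to $u_n$ via the uniform translate bound.

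\emph{Step 3: Upgrade to $L^2(0,T;Y)$.} I would apply Step 1 pointwise in $t$ and then integrate:
\[
\|u_n-u_k\|_{L^2(0,T;Y)} \le \varepsilon\|u_n-u_k\|_{L^2(0,T;X)} + C_\varepsilon\|u_n-u_k\|_{L^2(0,T;Z)} \le 2M\varepsilon + C_\varepsilon\|u_n-u_k\|_{L^2(0,T;Z)}.
\]
Taking $\limsup$ and letting $\varepsilon\to0$ shows the subsequence is Cauchy in $L^2(0,T;Y)$, which proves the claim. Reflexivity of $X$ gives a weak limit in $L^2(0,T;X)$ that identifies the strong limit.

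\emph{Main obstacle.} The main obstacle is Step 2 when $p<2$ and the translate estimate must be obtained in $L^2$ rather than $L^1$. I would first try working entirely in $L^1(0,T;Z)$. Simon's theorem gives compactness in $L^1(0,T;Y)$. Then strong $L^1(Y)$ convergence, together with boundedness in $L^2(0,T;X)\subset L^2(0,T;Y)$, gives convergence in $L^q(0,T;Y)$ for all $q<2$. If the full $L^2$ is genuinely needed, I would recover it by interpolating with a higher-integrability bound, or I would accept $q<2$, which suffices for a.e.\ convergence as used in the paper.
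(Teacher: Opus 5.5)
Your architecture is the standard Simon-type proof and is sound: Ehrling, a uniform time-translate estimate in $Z$, Simon's criterion (or Steklov averages with Arzel\`a--Ascoli), then the Ehrling upgrade of Step~3. Your treatment of $p<2$, however, contains an invalid step, and your fallback proves less than the statement.

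\textbf{The invalid step.} Interpolating an $L^1(0,T-h;Z)$ translate bound of order $h$ against a mere $L^2(0,T-h;Z)$ bound can never give an $L^2$ estimate with a rate. The inequality $\|w\|_{L^q}\le\|w\|_{L^1}^{\theta}\|w\|_{L^2}^{1-\theta}$ gives smallness only for $q<2$, since $q=2$ forces $\theta=0$. Settling for convergence in $L^q(0,T;Y)$ with $q<2$ does not give the asserted strong convergence in $L^2(0,T;Y)$.

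\textbf{The fix for $p>1$.} The obstacle is illusory. The H\"older estimate you state at the start of Step~2 holds for every $p>1$, not only $p\ge 2$, because pointwise in $t$
\begin{align*}
\|u_n(t+h)-u_n(t)\|_Z\le\int_t^{t+h}\|\partial_t u_n\|_Z\,\mathrm{d}s\le h^{1-1/p}M,\qquad M:=\sup_n\|\partial_t u_n\|_{L^p(0,T;Z)}.
\end{align*}
Hence $\|u_n(\cdot+h)-u_n\|_{L^2(0,T-h;Z)}\le T^{1/2}h^{1-1/p}M$. For $p=4/3$ this is exactly your $h^{1/4}$, with no detour through $L^1$.

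\textbf{The fix for $p=1$.} The higher-integrability bound you were missing lives in $L^\infty(0,T;Z)$, not $L^2$. Averaging $u_n(t)=u_n(s)+\int_s^t\partial_t u_n$ over $s$ gives $\sup_t\|u_n(t)\|_Z\le T^{-1}\|u_n\|_{L^1(0,T;Z)}+\|\partial_t u_n\|_{L^1(0,T;Z)}$. Therefore
\[
\|u_n(\cdot+h)-u_n\|_{L^2(Z)}^2\le\|u_n(\cdot+h)-u_n\|_{L^\infty(Z)}\,\|u_n(\cdot+h)-u_n\|_{L^1(Z)}\lesssim h .
\]
The same sup bound also handles the end strip $(T-h,T)$, which your Steklov-average variant leaves untreated.

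\textbf{Comparison with the paper.} With this repair, your route differs from what the paper relies on. The paper gives no proof and simply cites Roub\'{\i}\v{c}ek's Aubin--Lions lemma. The reflexivity of $X$ and $Z$ in the statement reflects the classical Lions argument. That argument extracts a weakly convergent subsequence in the reflexive space and reduces, via Ehrling, to strong convergence in $L^2(0,T;Z)$. It then obtains this from pointwise-in-time convergence, the uniform $C([0,T];Z)$ bound, and dominated convergence. Your translate-based argument never uses reflexivity, so that hypothesis is superfluous, as is your closing remark about identifying the limit through a weak limit. It also covers $p=1$. The price is a Kolmogorov--Riesz-type compactness criterion in Bochner spaces, or the Steklov/Arzel\`a--Ascoli diagonal construction in its place.
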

	
	\begin{thm}[Orthogonal projection theorem~{\cite[Corollary 5.4]{brezis}}] \label{thm:orthogonal_projection}
		Let \(H\) be a Hilbert space and let \(M \subset H\) be a closed subspace. Then for every \(v \in H\), there exists a unique element \(\pi_M v \in M\) such that $v - \pi_M v \in M^\perp.$ Equivalently, every \(v \in H\) admits a unique decomposition $v = \pi_M v + (v - \pi_M v)$
		with \(\pi_M v \in M\) and \(v - \pi_M v \in M^\perp\). Moreover, the mapping \(\pi_M : H \to M\) is linear and satisfies the Pythagoras rule $$\|v\|^2 = \|\pi_M v\|^2 + \|v - \pi_M v\|^2$$ and in particular $\|\pi_M v\| \le \|v\|$
	\end{thm}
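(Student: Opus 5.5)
The plan is the classical variational argument: obtain $\pi_M v$ as the point of $M$ closest to $v$, and then read off orthogonality, uniqueness, linearity and the Pythagoras rule from this minimality. Fix $v \in H$ and set $d := \inf_{w \in M} \|v - w\|$, which is finite and nonnegative since $0 \in M$. Choose a minimising sequence $(w_k)_{k \ge 1} \subset M$ with $\|v - w_k\| \rightarrow d$.

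\emph{Existence.} I show $(w_k)$ is Cauchy. Apply the parallelogram law to $a = v - w_k$ and $b = v - w_l$:
\begin{align*}
\|w_k - w_l\|^2 = 2\|v - w_k\|^2 + 2\|v - w_l\|^2 - 4\left\|v - \tfrac{w_k + w_l}{2}\right\|^2 .
\end{align*}
Since $M$ is a subspace, $(w_k + w_l)/2 \in M$, so the last norm is at least $d$. Hence $\|w_k - w_l\|^2 \le 2\|v-w_k\|^2 + 2\|v-w_l\|^2 - 4d^2 \rightarrow 0$. Completeness of $H$ gives a limit $w_\ast$, and closedness of $M$ gives $w_\ast \in M$. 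Continuity of the norm gives $\|v - w_\ast\| = d$. Set $\pi_M v := w_\ast$.

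\emph{Orthogonality.} For any $z \in M$ and $t \in \mathbb{R}$, the function $t \mapsto \|v - \pi_M v - t z\|^2$ attains its minimum at $t = 0$. Expanding it as
\begin{align*}
\|v - \pi_M v\|^2 - 2t\,(v - \pi_M v, z) + t^2\|z\|^2
\end{align*}
and setting the derivative at $t=0$ to zero yields $(v - \pi_M v, z) = 0$. Thus $v - \pi_M v \in M^\perp$.

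\emph{Uniqueness of the decomposition.} Suppose $m_1, m_2 \in M$ both satisfy $v - m_i \in M^\perp$. Then $m_1 - m_2$ lies in $M \cap M^\perp$, so $\|m_1 - m_2\|^2 = 0$.

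\emph{Linearity.} For scalars $\alpha, \beta$, the element $\alpha \pi_M v + \beta \pi_M w$ lies in $M$. Its difference from $\alpha v + \beta w$ lies in $M^\perp$, since $M^\perp$ is a subspace. Uniqueness then forces $\pi_M(\alpha v + \beta w) = \alpha \pi_M v + \beta \pi_M w$.

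\emph{Pythagoras rule.} Expand $\|\pi_M v + (v - \pi_M v)\|^2$; the cross term vanishes by orthogonality, giving $\|v\|^2 = \|\pi_M v\|^2 + \|v - \pi_M v\|^2$. In particular $\|\pi_M v\| \le \|v\|$.

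The only step with real content is the Cauchy property of the minimising sequence. It rests on two facts: the parallelogram identity, which holds because the norm comes from an inner product, and the convexity of $M$, which ensures that the midpoints $(w_k + w_l)/2$ stay in $M$ and so cannot beat $d$. Closedness of $M$ is exactly what keeps the limit inside $M$. Everything else is algebraic bookkeeping.
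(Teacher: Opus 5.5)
Your proof is correct and follows essentially the standard argument of the cited source (Brezis, Theorem 5.2 and Corollary 5.4), which the paper quotes without proof: a minimising sequence made Cauchy by the parallelogram law, orthogonality obtained by perturbing along $M$, and uniqueness, linearity and Pythagoras derived algebraically. The expansion $\|v-\pi_M v\|^2 - 2t\,(v-\pi_M v,z) + t^2\|z\|^2$ assumes real scalars, which matches the setting of the paper; for a complex $H$ you would take the real part of the cross term and also test with $iz$.
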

	
	\begin{thm}[Elliptic regularity~{\cite[Section 2.1]{CC_NN}}] \label{elliptic_regularity}
		The weak solution $u \in H^\mathrm{m}_0(\Omega)$ to the problem $(-1)^\mathrm{m} \Delta^\mathrm{m} u = G$ with $G \in H^{-s}(\Omega)$, wherein $\mathrm{m} - \sigma \le s \le \mathrm{m}$ satisfies $u \in V \cap H^{2\mathrm{m}-s}(\Omega)$ and $\|u\|_{H^{2\mathrm{m} - s}(\Omega)} \le \mathrm{C}_{\rm reg}(s) \|G\|_{H^{-s}(\Omega)}$. Here, $\sigma$ is a constant the exclusively depend on the geometry of the domain $\Omega$ and $0 < \sigma \le 1$.
	\end{thm}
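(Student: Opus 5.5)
The plan is to prove the estimate at the two endpoints $s=\mathrm{m}$ and $s=\mathrm{m}-\sigma$ and then fill in the intermediate range by interpolation. Throughout I use that $V=\Hm$ carries the norm $\|\rmD(\bullet)\|$, which is equivalent to the full $H^{\mathrm{m}}(\Omega)$ norm by the Poincar\'e--Friedrichs inequality, and that $H^{-\mathrm{m}}(\Omega)=V^{\ast}$.

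\emph{Endpoint $s=\mathrm{m}$.} The bilinear form $\ellip$ is coercive on $V$, since $\ellip(v,v)=\|v\|_V^2$ by definition. For $G\in H^{-\mathrm{m}}(\Omega)$, the map $v\mapsto\langle G,v\rangle$ is bounded on $V$. The Lax--Milgram lemma therefore gives a unique weak solution $u\in V$ with $\ellip(u,v)=\langle G,v\rangle$ for all $v\in V$. Testing with $v=u$ gives $\|u\|_V\le\|G\|_{H^{-\mathrm{m}}(\Omega)}$. This is exactly the claim with $2\mathrm{m}-s=\mathrm{m}$.

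\emph{Endpoint $s=\mathrm{m}-\sigma$, the main obstacle.} This is the genuine shift theorem for the Dirichlet Laplacian ($\mathrm{m}=1$) and the clamped bilaplacian ($\mathrm{m}=2$) on a polygon. I would not reprove it from scratch. Instead I would invoke the corner-singularity theory of Kondrat'ev, Grisvard and Dauge (Blum--Rannacher for $\mathrm{m}=2$), in three steps.
\begin{enumerate}
\item Away from corners, the boundary is flat and interior plus boundary regularity gives $H^{2\mathrm{m}-s}$ locally for every $s\le\mathrm{m}$ in the relevant range.
\item Near each vertex $x_k$ with interior angle $\omega_k$, the solution splits into a regular part in $H^{\mathrm{m}+1}$ plus finitely many singular functions $r^{\lambda}\Phi_\lambda(\theta)$. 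The exponents $\lambda$ solve a characteristic equation: $\lambda=j\pi/\omega_k$ for $\mathrm{m}=1$, and $\sin^2(\lambda\omega_k)=\lambda^2\sin^2\omega_k$ for $\mathrm{m}=2$, with exponents shifted by one for the biharmonic.
\item A singular function lies in $H^{\mathrm{m}+\sigma}$ precisely when $\mathrm{Re}\,\lambda$ exceeds $\sigma$, after the appropriate shift. Hence taking $\sigma$ strictly below $\min_k\mathrm{Re}\,\lambda_k$ (and $\le 1$) gives $u\in H^{\mathrm{m}+\sigma}(\Omega)$.
\end{enumerate}
The quantitative bound $\|u\|_{H^{\mathrm{m}+\sigma}}\lesssim\|G\|_{H^{\sigma-\mathrm{m}}}$ follows from the closed graph theorem or from the explicit a priori estimates of this theory. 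The geometry-only dependence of $\sigma$ is visible here. For convex $\Omega$ all angles are below $\pi$, so $\min\mathrm{Re}\,\lambda>1$ and $\sigma=1$, consistent with the remark on convex domains. The delicate point I expect is making sure no eigenvalue with $\mathrm{Re}\,\lambda\in(0,\sigma]$ is missed for $\mathrm{m}=2$, since the roots there can be complex.

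\emph{Intermediate $s$.} Let $S:G\mapsto u$ denote the solution operator. The two endpoints show $S$ is bounded $H^{-\mathrm{m}}\to H^{\mathrm{m}}_0$ and $H^{\sigma-\mathrm{m}}\to H^{\mathrm{m}+\sigma}\cap V$. Complex interpolation $[\,\cdot\,,\,\cdot\,]_\theta$ of both scales then gives boundedness $H^{-s}\to H^{2\mathrm{m}-s}$ for every $s\in[\mathrm{m}-\sigma,\mathrm{m}]$. The constant is $\mathrm{C}_{\rm reg}(s)\le\mathrm{C}_0^{1-\theta}\mathrm{C}_1^{\theta}$, where $\mathrm{C}_0$ and $\mathrm{C}_1$ are the endpoint constants and $\theta$ is the interpolation parameter determined by $s$.

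For this step one must check that the negative-order spaces interpolate correctly, which follows by duality from the interpolation of $H^{t}_0(\Omega)$ on Lipschitz domains. The half-integer exceptional case $t=1/2$ would be avoided by restricting $\sigma$ slightly if necessary.
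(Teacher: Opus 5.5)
The paper does not prove this statement; it is quoted from \cite{CC_NN}, which in turn rests on the corner-singularity theory (Kondrat'ev, Grisvard, Dauge; Blum--Rannacher for $\mathrm{m}=2$). Your outline is correct. At its one substantive point, the endpoint $s=\mathrm{m}-\sigma$, it imports exactly that theory. The difference is how you reach the intermediate orders: Lax--Milgram at $s=\mathrm{m}$ plus complex interpolation, rather than appealing to the shift theorem separately at each $s$. That is tidier and gives an explicit constant, at the price of an interpolation check.

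That check is easier than you fear. You only need $H^{-s}(\Omega)\hookrightarrow[H^{-\mathrm{m}},H^{\sigma-\mathrm{m}}]_\theta$. By the duality theorem for complex interpolation, this amounts to $[H^{\mathrm{m}}_0,H^{\mathrm{m}-\sigma}_0]_\theta\hookrightarrow H^{s}_0(\Omega)$. That embedding holds for every $s$, because the interpolation space embeds into $[H^{\mathrm{m}},H^{\mathrm{m}-\sigma}]_\theta=H^{s}(\Omega)$ and contains $H^{\mathrm{m}}_0(\Omega)$ as a dense subset. At a half-integer the interpolation space may be strictly smaller than $H^s_0$ (an $H_{00}$-type space), but then its dual is only larger than $H^{-s}$, so you gain. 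Your remedy of shrinking $\sigma$ would not have helped anyway: for $\sigma\ge 1/2$ the half-integer $\mathrm{m}-1/2$ stays inside $[\mathrm{m}-\sigma,\mathrm{m}]$.

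Two small precisions at the endpoint. First, for $G\in H^{\sigma-\mathrm{m}}$ the regular part of the expansion lies in $H^{\mathrm{m}+\sigma}$, not $H^{\mathrm{m}+1}$. Second, for $\mathrm{m}=2$ the root $\lambda=1$ of $\sin^2(\lambda\omega)=\lambda^2\sin^2\omega$ occurs for every $\omega$. For $\omega\in(0,\pi)$ it is spurious, since its angular function vanishes, so it must be discarded. Otherwise your rule $\sigma<\min\mathrm{Re}\,\lambda$ would exclude $\sigma=1$ on convex domains.
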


	\begin{thm}[{\cite[Lemma 7.1]{Roubicek_nonlinearpde}}] \label{thm:V*embedding}
		Suppose that $p,q \ge 1$, $X$ is a Banach space, $Y$ is a locally convex space, and $X \hookrightarrow Y$ continuously. Then, the space $\{u: u \in L^p(0,T;X),\partial_t u \in L^q(0,T;Y) \}$ is continuously embedded in $C([0,T];Y)$. 
	\end{thm}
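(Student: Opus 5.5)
The plan is to identify $u$ almost everywhere with a primitive of its derivative, and then read off both continuity and the norm bound from the fundamental theorem of calculus.

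First, the integrability. Since $X \hookrightarrow Y$ continuously and $0<T<\infty$, H\"older's inequality gives $u \in L^p(0,T;X) \subset L^1(0,T;Y)$. Likewise $\partial_t u \in L^q(0,T;Y) \subset L^1(0,T;Y)$, since $q \ge 1$.

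For the first step I will take $Y$ to be a Banach space (or at least complete), which is the case needed in Section~\ref{sec:initial_condition}, where $Y = V^{\ast}$. The locally convex case is obtained by applying the same argument to each continuous seminorm $p_\alpha$ of $Y$ in place of the norm. Define the Bochner primitive
\begin{align*}
w(t) := \int_0^t \partial_t u(s)\,\mathrm{d}s .
\end{align*}
Absolute continuity of the integral gives $w \in C([0,T];Y)$, and the derivative of $w$ in the sense of $Y$-valued distributions is $\partial_t u$.

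Second, I show that $u - w$ is almost everywhere a constant $c \in Y$. For every $\ell \in Y^{\ast}$, the scalar function $\ell(u-w) \in L^1(0,T)$ has vanishing distributional derivative. Hence, by the classical scalar du Bois-Reymond lemma, it equals a constant almost everywhere. To produce a single vector $c$, I fix $\theta \in C_0^\infty(0,T)$ with $\int_0^T\theta\,\mathrm{d}t = 1$ and set
\begin{align*}
c := \int_0^T (u-w)\,\theta\,\mathrm{d}t \in Y .
\end{align*}
Then $\ell(u - w - c) = 0$ almost everywhere for each $\ell \in Y^{\ast}$. The functionals $\ell$ separate points because $Y$ is Hausdorff locally convex (Hahn--Banach). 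Upgrading this to $u = c + w$ almost everywhere requires care, since the null set depends on $\ell$. I handle this by using that $u - w - c$ is Bochner measurable, hence essentially separably valued, so a countable family of $\ell$ separating points of a separable closed subspace suffices. This yields a representative $\tilde u = c + w \in C([0,T];Y)$.

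Third, the continuity estimate. For all $s,t \in [0,T]$,
\begin{align*}
\|\tilde u(t)\|_Y \le \|\tilde u(s)\|_Y + \|\partial_t u\|_{L^1(0,T;Y)} .
\end{align*}
Averaging in $s$ over $(0,T)$ gives
\begin{align*}
\sup_t \|\tilde u(t)\|_Y \le T^{-1}\|u\|_{L^1(0,T;Y)} + \|\partial_t u\|_{L^1(0,T;Y)} \lesssim \|u\|_{L^p(0,T;X)} + \|\partial_t u\|_{L^q(0,T;Y)} ,
\end{align*}
using the embedding constant of $X \hookrightarrow Y$ and H\"older's inequality in time. This is exactly the continuity of the embedding, and linearity of the map $u \mapsto \tilde u$ is immediate.

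The main obstacle I expect is the second step. One has to give meaning to the Bochner integral and to the identification $u - w = c$ when $Y$ is only locally convex rather than Banach. This is where completeness of $Y$ and the reduction to countably many functionals, or to individual seminorms, must be invoked carefully.
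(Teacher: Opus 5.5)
The paper gives no proof of this statement; it only quotes~\cite[Lemma 7.1]{Roubicek_nonlinearpde}. Your argument is the standard one: write $u$ as a constant plus the Bochner primitive of $\partial_t u$, then average $\|\tilde u(t)\|_Y \le \|\tilde u(s)\|_Y + \|\partial_t u\|_{L^1(0,T;Y)}$ over $s$. When $Y$ is a Banach space it is complete and correct, including the Pettis/norming-sequence step that yields a single null set. That case covers the only use in the paper, namely $X = V$, $Y = V^{\ast}$ in Section~\ref{sec:initial_condition}.

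What you have not proved is the statement as written, with $Y$ merely locally convex. The sentence ``apply the same argument to each continuous seminorm'' hides two real problems. First, $w(t) = \int_0^t \partial_t u\,\mathrm{d}s$ must be an element of $Y$. Seminorm estimates only make the approximating integrals Cauchy. Without (sequential) completeness of $Y$, or a convention for $L^q(0,T;Y)$ that forces $\int_s^t \partial_t u\,\mathrm{d}s \in Y$, the function $c + w$ takes values only in the completion $\widehat{Y}$.

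Some such hypothesis is genuinely needed. Take $X = \ell^1$, let $Y = \ell^1$ carry the norm $\|\cdot\|_{\ell^2}$, and set $u(t)_k = k^{-1}\rho(k|t-t_0|)$ with $\rho(r) = \min\{1,\max\{0,2-r\}\}$. Then $\|u(t)\|_{\ell^1}$ grows only logarithmically as $t \to t_0$, so $u \in L^p(0,T;X)$ for all $p<\infty$. The coordinatewise derivative is a strongly measurable $Y$-valued function representing $\partial_t u$. It satisfies $\|\partial_t u(t)\|_Y \le (1+|t-t_0|^{-1})^{1/2}$, so its norm lies in $L^q(0,T)$ for every $q<2$. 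Yet $u(t) \to (k^{-1})_{k\ge 1} \notin Y$ in $\ell^2$ as $t \to t_0$, so no continuous $Y$-valued representative exists. Consistently, $\int_{t_0}^{t}\partial_t u\,\mathrm{d}s \notin Y$.

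Second, your a.e.\ identification $u = c + w$ uses a countable subfamily of $Y^{\ast}$ separating the points of a separable subspace. That is a Banach (at best metrizable) device, and it can fail for non-metrizable $Y$. Proceeding seminorm by seminorm gives one exceptional null set per seminorm. These cannot be merged when there are uncountably many seminorms, unless equality in $L^1(0,T;Y)$ is itself understood seminorm-wise.

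So you have two options. One is to state and prove the lemma for Banach (or Fr\'echet) $Y$, which is all the paper needs. The other is to add a completeness hypothesis on $Y$ and spell out the integration and a.e.\ conventions under which your seminorm-wise reduction becomes legitimate.
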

	
	\begin{thm}[{\cite[Chapter 5, Theorem 4]{evans_PDE}}] \label{thm:improved_reg_thm}
		Suppose that $\Omega$ is an open and bounded set with sufficiently smooth domain $\partial \Omega$ and $k \in \mathbb{N}$. If $u \in L^2(0,T;H^{k+2}(\Omega))$ and $\partial_t u \in L^2(0,T;H^k(\Omega))$, then $u \in C([0,T],H^{k+1}(\Omega)).$ 
	\end{thm}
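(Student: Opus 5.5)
The plan is to prove the case $k=0$ by the classical extension--mollification argument of Evans, and then reduce general $k$ to it by differentiating in space. Throughout I use $\partial\Omega$ smooth, as assumed in the statement, so that a bounded extension operator exists.

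\medskip \noindent \emph{Step 1 (extension).} Fix a bounded open set $W \supset\supset \Omega$. Choose a linear extension operator $\mathcal{E}$ mapping $H^s(\Omega)$ into $H^s_0(W)$ boundedly for all $0 \le s \le k+2$; a Stein-type extension followed by a cutoff works. Set $\bar u = \mathcal{E}u$. Since $\mathcal{E}$ is linear and acts only in space, it commutes with $\partial_t$. Hence
\begin{align*}
\bar u \in L^2(0,T;H^{k+2}_0(W)), \qquad \partial_t \bar u \in L^2(0,T;H^k_0(W)).
\end{align*}
Next, extend $\bar u$ in time to $(-\tau,T+\tau)$ by even reflection about $t=0$ and $t=T$; these classes are preserved.

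\medskip \noindent \emph{Step 2 (the case $k=0$).} Mollify in time, $\bar u^\varepsilon = \rho_\varepsilon * \bar u$. Then $\bar u^\varepsilon$ is smooth in $t$ with values in $H^2_0(W)$. Since $\bar u^\varepsilon(t)$ has compact support in $W$, the spatial integration by parts below produces no boundary terms:
\begin{align*}
\frac{\mathrm{d}}{\mathrm{d}t}\|\nabla(\bar u^\varepsilon - \bar u^\delta)\|_{L^2(W)}^2 = -2\big(\partial_t(\bar u^\varepsilon - \bar u^\delta), \Delta(\bar u^\varepsilon - \bar u^\delta)\big)_{L^2(W)}.
\end{align*}
An analogous identity holds for the $L^2$ part of the norm. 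Integrate from a point $s$ to $t$, where $s$ is chosen so that $\|(\bar u^\varepsilon-\bar u^\delta)(s)\|_{H^1}^2$ is at most its time average. Young's inequality then gives
\begin{align*}
\sup_{t}\|\bar u^\varepsilon - \bar u^\delta\|_{H^1(W)}^2 \lesssim \int \big(\|\partial_t(\bar u^\varepsilon - \bar u^\delta)\|^2 + \|\bar u^\varepsilon - \bar u^\delta\|_{H^2(W)}^2\big)\,\mathrm{d}t .
\end{align*}
The right-hand side tends to $0$ as $\varepsilon,\delta\to0$, because mollification converges in $L^2(H^2)$ and in $L^2(L^2)$. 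Thus $(\bar u^\varepsilon)$ is Cauchy in $C([0,T];H^1(W))$. Its limit agrees with $\bar u$ for a.e.\ $t$, since $\bar u^\varepsilon \to \bar u$ in $L^2(0,T;H^1(W))$. Restricting to $\Omega$ gives $u \in C([0,T];H^1(\Omega))$, and the same computation yields the bound $\max_t\|u\|_{H^1} \lesssim \|u\|_{L^2(H^2)} + \|\partial_t u\|_{L^2(L^2)}$.

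\medskip \noindent \emph{Step 3 (general $k$).} For each multi-index $|\alpha| \le k$, the function $D^\alpha \bar u$ lies in $L^2(0,T;H^2_0(W))$ with $\partial_t D^\alpha \bar u = D^\alpha \partial_t \bar u \in L^2(0,T;L^2(W))$. Apply Step 2 to each $D^\alpha\bar u$; the boundary terms still vanish by compact support in $W$. This gives $D^\alpha \bar u \in C([0,T];H^1(W))$ for all $|\alpha|\le k$, that is, $\bar u \in C([0,T];H^{k+1}(W))$. Restricting to $\Omega$ concludes the proof.

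\medskip \noindent The main obstacle I expect is Step 1: I need an extension operator that is simultaneously bounded on $L^2$-based spaces up to order $k+2$, commutes with $\partial_t$, and produces compact support. The compact support is exactly what removes the boundary terms in the spatial integration by parts of Step 2. Without it, for instance working directly on a polygonal $\Omega$ where only $H^{\mathrm m}_0$ traces are available, the identity in Step 2 picks up uncontrolled boundary contributions; this is why the smoothness of $\partial\Omega$ is needed.
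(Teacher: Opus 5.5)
Your proof is correct and is essentially the extension, time-mollification and energy-identity argument of Evans (Section 5.9.2, Theorem 4); the paper only cites that result and gives no proof of its own. The only cosmetic difference is that you extend $u$ once with an operator bounded up to order $k+2$ and then differentiate on $W$, whereas Evans applies the $k=0$ case directly to $D^\alpha u$, and your closing remark is overstated: a Stein-type extension is bounded on Lipschitz (in particular polygonal) domains, so your argument does not actually need $\partial\Omega$ to be smooth.
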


\end{document}